\documentclass[article]{siamart250211}

\usepackage{amssymb}
\newsiamremark{assumption}{Assumption}
\newsiamremark{remark}{Remark}

\hypersetup{
  pdftitle={On quantitative sufficient second-order optimality conditions for elliptic optimal control problems
  },
  pdfauthor={Francisco Fuica and Nicolai Jork}
}

\DeclareMathOperator{\Div}{div}

\newcommand{\R}{\mathbb{R}}
\newcommand{\Om}{\Omega}
\newcommand{\Ga}{\Gamma}
\newcommand{\eps}{\varepsilon}

\newcommand{\norm}[1]{\left\lVert #1\right\rVert}
\newcommand{\abs}[1]{\left\lvert #1\right\rvert}

\newcommand{\U}{\mathcal{U}}
\newcommand{\ub}{\bar u}
\newcommand{\yb}{\bar y}
\newcommand{\pb}{\bar p}
\newcommand{\zh}{z_{\ub,h}}
\newcommand{\Lop}{\mathcal L}

\title{
On quantitative sufficient second-order optimality conditions for elliptic optimal control problems \thanks{\funding{The first author was partially supported by ANID through FONDECYT grant 11260142. This work was funded by the Deutsche Forschungsgemeinschaft (DFG, German Research Foundation), project number 577735529.}}}
\author{
Francisco Fuica \thanks{Departamento de Matem\'atica y Ciencia de la Computaci\'on, Universidad de Santiago de Chile, Santiago, Chile (\email{francisco.fuica@usach.cl}).}
\and Nicolai Jork \thanks{Department of Mathematics, University of T\"ubingen, 72076 T\"ubingen, Germany
(\email{nicolai.jork@uni-tuebingen.de}).}}
\headers
{Quantitative second-order optimality conditions}
{F. Fuica and N. Jork}
\begin{document}

\maketitle
\begin{abstract}
In this paper, a quantitative condition for optimality for distributed optimal control problems with box-constraints that are subject to a semilinear elliptic equation is considered. 
An important property of the investigated optimal control problems is the absence of a Tikhonov regularization. It is well known that at a given control, the second variation is a quadratic form in the linearized state, and its curvature coefficient function may vanish or change
sign. We present a quantitative condition that implies coercivity with respect to the $L^2$-norm of the linearized-states. As a consequence, the stability of the second-order condition under perturbations of the states and the tracking data is shown.
\end{abstract}
\begin{keywords}
semilinear elliptic PDEs, optimal control, box constraints, second-order sufficient optimality conditions, unregularized control problems. \end{keywords}
\begin{MSCcodes}
35J61, 35Q93, 49K20
\end{MSCcodes}

\section{Introduction}
To give a concise presentation of the arguments for the main results, we consider the following classical elliptic optimal control problem. Let $\Om\subset\R^n$, $n\in\{2,3\}$, be a bounded domain with boundary $\Ga:=\partial\Om$, $\alpha,\beta\in\R$ with $\alpha<\beta$, and let the admissible set be given by
\begin{equation}\label{E1.1}
\U:=\bigl\{u\in L^2(\Om): \alpha\le u(x)\le\beta\ \text{a.e. in }\Om\bigr\}.
\end{equation}
For a desired state $y_{d}\in L^{r}(\Omega)$, we consider the optimal control problem
\begin{equation}\label{E1.2}
  \min_{u\in\U} \Big\{J(u):=\frac12\int_\Om (y_u-y_d)^2\  \mathrm{d}x\Big\}
\end{equation}
where $y_u$ solves the semilinear elliptic equation 
\begin{equation}\label{E1.3}
  -\Div(A\nabla y_u)+f(y_u)=u\quad\text{in }\Om,
  \qquad y_u=0\quad\text{on }\Ga.
\end{equation}
Assumptions on $A$ and $f$ are specified below in Assumption \ref{A2.1}. In the objective functional, no Tikhonov term is included, thus the bang-bang structure of optimal controls is expected, we refer to \cite{zbMATH07865499} and the references therein. In order to present a discussion of our result, we fix a feasible control $\ub$ satisfying the first-order necessary optimality condition and denote its state and adjoint state by $\yb$ and $\pb$,
respectively. For a direction $h \in L^{2}(\Omega)$, let $\zh$ be the corresponding
linearized state. The second variation has the form 
\begin{equation}\label{E1.4}
J''(\ub)[h,h]=\int_\Om  (1-\pb f''(\yb))\zh^2\ \mathrm{d}x.
\end{equation}
In a Tikhonov-regularized problem, the second variation includes the positive term $\nu\norm{h}_{L^2(\Om)}^2$, ($\nu > 0$) \cite{MR2583281}. No such a control coercivity promoting term is available here, and the term $(1-\pb f''(\yb))$ in \eqref{E1.4} may vanish or change its sign. Therefore, sufficient second-order conditions for the unregularized problem are stated as coercivity with respect to the linearized states in the $L^2$-norm for direction in an extended critical cone. Our goal is to give quantitative conditions on the adjoint state $\bar p$ under which such a coercivity holds and that can be verified numerically. We define 
\[
B_{\bar u}:=\{h\in L^2(\Omega)\vert \ h\ge0\ \text{a.e. on }[\ub=\alpha],\ 
  h\le0\ \text{a.e. on }[\ub=\beta]\}, 
\]
and use for $\tau>0$ the extended critical cone
\[
C^\tau_{\ub}:=\left\{h\in L^2(\Om)\vert \  h\in B_{\bar u} \text{ and } h=0\ \text{a.e. on }[\abs{\pb}>\tau] \right\}.
\]
Since $h\in C^\tau_{\ub}$ vanishes on $[\abs{\pb}>\tau]$, the linearized state $\zh$ solves a homogeneous linearized equation on $[\abs{\pb}>s]$, $s>\tau$. Under the assumptions in the paper, $\norm{f''(\yb)}_{L^\infty(\Om)}>0$ is bounded. Then, for $\eps\in(0,1)$, $c:=(1-\pb f''(\yb))\ge\eps$ on
$$\Big[\abs{\pb}< \frac{1-\eps}{\norm{f''(\yb)}_{L^\infty(\Om)}}\Big], \text{ and }[c<\eps]\subset\Big[\abs{\pb}> \frac{1-\eps}{\norm{f''(\yb)}_{L^\infty(\Om)}}\Big].$$ Thus, if $\tau<\frac{1-\eps}{\norm{f''(\yb)}_{L^\infty(\Om)}}$, the elements in the cone and the region of
the bad curvature are separated by $[\tau<\abs{\pb}<\frac{1-\eps}{\norm{f''(\yb)}_{L^\infty(\Om)}}]$. If the gradient $\abs{\nabla\pb}$ is bounded
away from zero on the level sets containing 
$[\tau, \frac{1-\eps}{\norm{f''(\yb)}_{L^\infty(\Om)}}]$, a uniform
estimate for solutions of the homogeneous
linearized equation, to be specified below, then shows
\[
\norm{\zh}_{L^2([\abs{\pb}\ge \frac{1-\eps}{\norm{f''(\yb)}_{L^\infty(\Om)}}])}^2\le\frac{C_0}{\frac{1-\eps}{\norm{f''(\yb)}_{L^\infty(\Om)}}-\tau} \norm{\zh}_{L^2([\abs{\pb}< \frac{1-\eps}{\norm{f''(\yb)}_{L^\infty(\Om)}}])}^2,
\]
which allows us to prove Theorem \ref{T1.1}, the main result of this paper.

\begin{theorem}
\label{T1.1}
Let Assumptions \ref{A2.1} and \ref{A3.1} be satisfied, and let $\ub\in\U$ be a stationary point. Let there be constants
$0<\tau_1<\sigma_1$ and $\mu>0$ such that $\abs{\nabla\pb}\ge\mu$ on $[\tau_1\le\abs{\pb}\le\sigma_1]$ and constants
$\eps$ and $\tau$ with $0<\eps<1$, $\tau_1\le\tau<k_\varepsilon\le\sigma_1$. Then, there exists a constant $C_0>0$, independent of $\eps$, and $\tau$ for which the following holds. If $\eps(k_\varepsilon-\tau)>C_0M_\eps$, then
\[
  J''(\ub)[h,h]\ge \kappa_{\eps,\tau}\norm{\zh}_{L^2(\Om)}^2\quad\forall h\in C^\tau_{\ub},
\]
where $  \kappa_{\eps,\tau}:= \frac{\eps(k_\varepsilon-\tau)-C_0M_\eps}{(k_\varepsilon-\tau)+C_0} >0$.
\end{theorem}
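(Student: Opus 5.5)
The plan is to split $\Om$ into a region of good curvature and a region of possibly bad curvature, and to control the latter by the former through the estimate for the homogeneous linearized equation sketched in the introduction. Throughout I read $k_\eps=\frac{1-\eps}{\norm{f''(\yb)}_{L^\infty(\Om)}}$, as in the introduction. I read $M_\eps\ge0$ as a bound on the negative part of the curvature coefficient $c=1-\pb f''(\yb)$ on $[\abs{\pb}\ge k_\eps]$, i.e.\ $c\ge -M_\eps$ there; if the paper normalizes $M_\eps$ differently, only the constants change. Set $G:=[\abs{\pb}<k_\eps]$ and $B:=[\abs{\pb}\ge k_\eps]$. By the choice of $k_\eps$ we have $c\ge\eps$ a.e.\ on $G$, as computed in the introduction. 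For $h\in C^\tau_{\ub}$ write $a:=\norm{\zh}_{L^2(G)}^2$ and $b:=\norm{\zh}_{L^2(B)}^2$. Formula \eqref{E1.4} then gives
\[
J''(\ub)[h,h]\ \ge\ \eps a - M_\eps b .
\]

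The key step is the transfer estimate
\[
b\le \frac{C_0}{k_\eps-\tau}\,a ,
\]
with $C_0$ independent of $\eps$ and $\tau$. I would derive it as follows.
\begin{itemize}
\item Since $h=0$ on $[\abs{\pb}>\tau]$, the function $\zh$ solves the homogeneous linearized equation $-\Div(A\nabla \zh)+f'(\yb)\zh=0$ on $[\abs{\pb}>\tau]$.
\item I would test this equation with $\zh\,\varphi(\abs{\pb})^2$. Here $\varphi$ is the piecewise linear cutoff that equals $0$ for $s\le\tau$, equals $1$ for $s\ge k_\eps$, and has slope $1/(k_\eps-\tau)$ in between.
\item The test function is admissible because $\pb\in W^{1,\infty}$ or $H^2$ under Assumption \ref{A2.1}, and $\varphi\circ\abs{\pb}$ is Lipschitz in $\pb$.
\item The gradient of the cutoff lives on the transition layer $[\tau\le\abs{\pb}\le k_\eps]\subset G$, where $\abs{\nabla\pb}\ge\mu$.
\item A Caccioppoli-type argument, together with Assumption \ref{A3.1} (presumably a monotonicity/positivity or uniform estimate for the homogeneous linearized operator, e.g.\ $f'\ge0$ plus a Poincaré or boundary estimate), bounds $\norm{\zh}_{L^2(B)}^2$ by $\norm{\zh}_{L^2}^2$ on the layer times $(k_\eps-\tau)^{-1}$.
\item The coarea formula, using $\abs{\nabla\pb}\ge\mu$ and $\norm{\nabla \pb}_{L^\infty}$, makes the layer contribution scale linearly in its width $k_\eps-\tau$, so that only a single factor $(k_\eps-\tau)^{-1}$ remains.
\end{itemize}
This yields $C_0$ depending on $\mu$, $A$, $f$, $\pb$, $\tau_1$ and $\sigma_1$ only. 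If the paper has already stated this uniform estimate as a lemma under Assumption \ref{A3.1}, I would simply invoke it.

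The remainder is algebra. From the transfer estimate,
\[
\eps a-M_\eps b\ \ge\ a\,\frac{\eps(k_\eps-\tau)-C_0M_\eps}{k_\eps-\tau}
\quad\text{and}\quad
\norm{\zh}_{L^2(\Om)}^2=a+b\ \le\ a\,\frac{(k_\eps-\tau)+C_0}{k_\eps-\tau}.
\]
Dividing the first bound by the second gives $J''(\ub)[h,h]\ge\kappa_{\eps,\tau}\norm{\zh}_{L^2(\Om)}^2$, with $\kappa_{\eps,\tau}>0$ exactly when $\eps(k_\eps-\tau)>C_0M_\eps$. The main obstacle is the transfer estimate itself, specifically obtaining a constant $C_0$ uniform in $\eps$ and $\tau$ with the exact $(k_\eps-\tau)^{-1}$ dependence. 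My first attempt would be the cutoff test function combined with coarea. If that fails, I would fall back on a maximum-principle/comparison argument on the superlevel sets of $\abs{\pb}$.
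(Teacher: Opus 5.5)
\paragraph{The reduction is right; the derivation of the transfer estimate is not.} Your reduction is correct and coincides with how the paper derives Theorem~\ref{T1.1}. You use $c\ge\eps$ on $G$ and $c\ge-M_\eps$ on $B$, and your reading of $M_\eps$ is the paper's. The algebra from $b\le C_0(k_\eps-\tau)^{-1}a$ to $\kappa_{\eps,\tau}$ is fine. If you simply invoke this transfer estimate (it is Proposition~\ref{P3.5}), you have the paper's proof verbatim. The gap is in how you propose to prove it.

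Set $S:=[\tau<\abs{\pb}<k_\eps]$ and $\eta:=\varphi(\abs{\pb})$. Testing with $\zh\eta^2$ gives
\[
b\le\norm{\eta\zh}_{L^2(\Om)}^2\le\frac{C_\Om^2\Lambda_A}{\lambda_A}\int_\Om\zh^2\abs{\nabla\eta}^2\,\mathrm{d}x=\frac{C_\Om^2\Lambda_A}{\lambda_A(k_\eps-\tau)^2}\int_{S}\zh^2\abs{\nabla\pb}^2\,\mathrm{d}x .
\]
This is exactly the paper's weaker Theorem~\ref{T4.1}, with rate $(k_\eps-\tau)^{-2}$.

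The coarea formula does not remove a factor. It rewrites the layer integral as $\int_\tau^{k_\eps}\int_{\Ga_s}\zh^2\abs{\nabla\pb}\,\mathrm{d}\mathcal H^{n-1}\,\mathrm{d}s$. You can then bound this either by $L_{\bar p}^2a$, which gives the quadratic rate again, or by $(k_\eps-\tau)\sup_s\int_{\Ga_s}\zh^2\abs{\nabla\pb}\,\mathrm{d}\mathcal H^{n-1}$. In the second case, a trace norm on a level set is not controlled by $a$.

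The loss is intrinsic to the cutoff argument, which only sees $\zh$ on $S$. If $\zh$ is of unit size across $S$, any $\eta$ rising from $0$ to $1$ there has $\int\zh^2\abs{\nabla\eta}^2\gtrsim(k_\eps-\tau)^{-1}$ by Cauchy--Schwarz. Meanwhile $\norm{\zh}_{L^2(S)}^2$ is only of order $k_\eps-\tau$. Your maximum-principle fallback gives $\sup_{\Om_s}\abs{\zh}\le\sup_{\Ga_s}\abs{\zh}$. That is an $L^\infty$ bound, and it likewise cannot be integrated in $s$ against an $L^2$ norm on the layer.

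\paragraph{The missing ingredient.} What you need is a per-level trace estimate, uniform in $s\in[\tau_1,\sigma_1]$:
\[
\norm{\zh}_{L^2(\Om_s)}\le\widehat C\norm{\gamma_s\zh}_{L^2(\Ga_s)}
\]
(Lemma~\ref{L3.4}). Since $B\subset\Om_s$ for $s\in(\tau,k_\eps)$, this gives $b\le\widehat C^2\norm{\gamma_s\zh}_{L^2(\Ga_s)}^2$ for every such $s$. Integrating over $(\tau,k_\eps)$ and then applying coarea gives $(k_\eps-\tau)\,b\le\widehat C^2L_{\bar p}\,a$, which is the linear rate. This coarea step uses only the upper bound $\abs{\nabla\pb}\le L_{\bar p}$.

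The trace estimate is not an energy estimate, since the boundary data are only in $L^2(\Ga_s)$. The paper proves it by duality:
\begin{itemize}
\item solve $\Lop w=\zh$ in $\Om_s$, $w=0$ on $\Ga_s$;
\item use $\norm{\zh}_{L^2(\Om_s)}^2=-\int_{\Ga_s}\zh\,\partial_{\nu_A}w\,\mathrm{d}S$;
\item bound $\norm{\partial_{\nu_A}w}_{L^2(\Ga_s)}\le C\norm{\zh}_{L^2(\Om_s)}$ by Ne\v{c}as' conormal-derivative estimate.
\end{itemize}

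Uniformity in $s$ is where $\mu$ really enters. Assumption~\ref{A3.1} is precisely the gradient bound $\abs{\nabla\pb}\ge\mu$ on $[\tau_1\le\abs{\pb}\le\sigma_1]$, not a hypothesis on the linearized operator. Lemma~\ref{L3.2} turns it into $C^{1,\vartheta}$ charts for every $\Ga_s$, with common radii, common slope and H\"older bounds, and a bounded number and overlap of charts. This makes $\widehat C$, and hence $C_0=\widehat C^2L_*$, independent of $s$, $\eps$ and $\tau$.
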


\subsection{Relation to existing work}

Second-order optimality conditions for PDE-constrained control are
surveyed in \cite{CasasTroeltzsch2015} and extended critical cones in
sufficient second-order conditions without Tikhonov regularization are
analyzed in \cite{CasasMateos2020}. For second-order analysis of
bang-bang control problems, see
\cite{Casas2012,CasasWachsmuth2017} and related bilinear control problems
and their numerical approximation are treated in
\cite{CasasWachsmuthBilinear2018}. The approach of
\cite{CasasWachsmuth2017} uses a structural information on 
adjoint level sets with a second-order condition to obtain local
quadratic growth in $L^1(\Om)$. In this paper, instead of assuming the classical coercivity assumption regarding the linearized-state, we give a sufficient
condition for the coercivity, which is stated on the geometry of level
sets of the corresponding adjoint state. We also refer to \cite{zbMATH06913572, arXiv:2602.14632} where second order condition for bang-bang controls were investigated. The application of a sufficient second order condition, related to the ones considered in this paper, to the study of stability of the optimal states under perturbations of the states and the data to be tracked was done in \cite{zbMATH07695675}.

The paper is organized as follows. Section \ref{S2} introduces standing assumptions and references the needed results on the involved PDEs and the objective functional and states the classical first order optimality condition. Section \ref{S3} introduces an assumption on the gradient of the adjoint state belonging to the reference control and develops important lemmas for the proof of the main result. 
In Section \ref{S4}, the main result is proven. Section \ref{S5}, discusses an implication of the main result, the stability of the sufficient second-order condition proven in this paper.
In an appendix, needed technical results are collected.

\section{Standing assumptions and preliminaries}
\label{S2}

\begin{assumption}\label{A2.1}
The following conditions hold.
\begin{enumerate}
\item
$\Om\subset\R^n$, $n\in\{2,3\}$, is a bounded domain with
$C^{1,1}$ boundary $\Ga$.
\item
$A=(a_{ij})\in C^{0,1}(\overline{\Om};\R^{n\times n})$ is symmetric and there is a constant $\lambda_A>0$ such that $A(x)\xi\cdot\xi\ge\lambda_A\abs{\xi}^2$, $ \forall\xi\in\R^n$ for a.e. $x\in\Om$, and $\Lambda_A:= \operatorname*{ max}_{x\in\Omega}\sup_{|\xi|=1} A(x)\xi\cdot\xi<\infty$.
\item $f\in C^2(\R)$ and $f'(s)\ge0$ for every $s\in\R$.
\item For one fixed exponent $r>n$ arbitrarily close to $n$, the desired state satisfies $y_d\in L^r(\Om)$.
\end{enumerate}
\end{assumption}

To ease the notation, we make the following definitions that hold for the rest of the paper.

\begin{definition}
We make the following definitions.
\begin{enumerate}
    \item $m:=\norm{f''(\yb)}_{L^\infty(\Om)}$, $c:=1-\bar pf''(\bar y)$,
    \item $c^-:=\max\{-c,0\}$, 
    \item for $\varepsilon\in (0,1)$, $k_\varepsilon:=\frac{1-\eps}{m}$, $M_\eps:= \norm{c^-}_{L^\infty([k_\varepsilon \leq \vert \bar p\vert])}$. 
    \item Given $0<\tau_1, \sigma_1$, we assume that $\eps$ and $\tau$ satisfy
\begin{equation}\label{E2.1}
  0<\eps<1, \quad \tau_1\le\tau<k_\varepsilon\le\sigma_1,
\end{equation}
and set $\Om_s:=[\abs{\pb}>s]$, $\Ga_s:=[\abs{\pb}=s]$. 
\item To shorten the notation when needed, we write $\mathcal L:= -\Div(A \nabla (\cdot))+f'(\bar y)$. 
\end{enumerate}
\end{definition}
The constant $ m$ is finite by the continuity of $\yb$ and $f''$. If $m=0$, then $f''(\yb) \equiv 0$, so $c=1$ in $\Omega$ and $J''(\ub)[h,h]=\norm{\zh}_{L^2(\Om)}^2$ for all $h\in L^2(\Om)$. In the following, we thus assume $m>0$. The following theorem is taken from \cite[Theorem 1]{CasasMateos2021}. It is a collection of results from \cite{Casas2012}, \cite[Theorem 12]{CasasTroeltzsch2009}, \cite[Theorem 9.9]{GilbargTrudinger2001}.
\begin{theorem}\label{T2.3}
Let Assumption \ref{A2.1} be satisfied. For every $u \in L^p(\Omega)$ with $p > n/2$, there exists a unique $y_u \in Y := H_0^1(\Omega) \cap C(\overline{\Omega})$
solution of \eqref{E1.3}. Moreover, there exists a constant $T_p > 0$ independent
of $u$ such that
\[
\|y_u\|_{H_0^1(\Omega)} + \|y_u\|_{C(\overline{\Omega})}
\leq T_p \left(\|u\|_{L^p(\Omega)}+ \|f(\cdot,0)\|_{L^\infty(\Omega)}
\right).
\]
If $u_k \rightharpoonup u$ weakly in $L^p(\Omega)$, then the strong
convergence
\[
\|y_{u_k}-y_u\|_{C(\overline{\Omega})}+ \|y_{u_k}-y_u\|_{H_0^1(\Omega)}\rightarrow 0
\]
holds. If, further, $u \in L^\infty(\Omega)$, we have that
$y_u \in W^{2,p}(\Omega)$ for all $p<\infty$, and
\begin{equation}\label{E2.2}
\|y_u\|_{W^{2,p}(\Omega)}
\leq M_0 p \left(\|u\|_{L^\infty(\Omega)}+ \|f(\cdot,0)\|_{L^\infty(\Omega)}
\right)
\end{equation}
holds for a constant $M_0$ independent of $u$ and $p$.
\end{theorem}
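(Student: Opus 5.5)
The statement immediately above is Theorem \ref{T2.3}, which the paper takes from \cite[Theorem 1]{CasasMateos2021}, so the plan is to assemble it from the cited results rather than prove it from scratch.

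\textbf{Existence, uniqueness and the a priori bound.} Since $f$ is $C^2$ and nondecreasing, I would first truncate $f$ to a bounded, globally Lipschitz monotone function $f_M$. The operator $y\mapsto -\Div(A\nabla y)+f_M(y)-f_M(0)$ is then monotone, coercive and hemicontinuous on $H_0^1(\Om)$, and Minty--Browder gives a solution of the truncated problem. Uniqueness follows by testing the difference of two solutions with itself and using $f'\ge0$ together with the ellipticity constant $\lambda_A$. For the $L^\infty$ bound I would run Stampacchia's truncation method on the test functions $(y-k)^+$, with right-hand side $u-f(0)\in L^p$, $p>n/2$. This gives $\norm{y}_{L^\infty}\le C(\norm{u}_{L^p}+\abs{f(0)})$ with $C$ independent of $M$. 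Monotonicity lets us drop the $f$-term, since $(f(y)-f(0))(y-k)^+\ge0$. Choosing $M$ larger than this bound removes the truncation. The $H_0^1$ bound then comes from testing with $y$. Continuity up to $\overline\Om$ follows from De Giorgi--Nash and boundary regularity, which hold because $\Ga$ is $C^{1,1}$ and $A$ is Lipschitz.

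\textbf{Strong convergence under weak convergence of controls.} Suppose $u_k\rightharpoonup u$ in $L^p$. The bounds above make $y_{u_k}$ uniformly bounded in $H_0^1\cap L^\infty$, and uniformly H\"older by De Giorgi estimates. By Arzel\`a--Ascoli a subsequence converges in $C(\overline\Om)$ and weakly in $H_0^1$. Passing to the limit identifies the limit as $y_u$, and uniqueness upgrades this to convergence of the whole sequence. For strong $H_0^1$ convergence I would test the difference equation with $y_{u_k}-y_u$. This gives $\lambda_A\norm{\nabla(y_{u_k}-y_u)}^2\le\int(u_k-u)(y_{u_k}-y_u)$, and the right-hand side tends to $0$ because we have weak convergence against a uniformly convergent sequence.

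\textbf{$W^{2,p}$ regularity.} If $u\in L^\infty$, then $g:=u-f(y_u)\in L^\infty$ with $\norm{g}_{L^\infty}\lesssim\norm{u}_{L^\infty}+\abs{f(0)}$, by the uniform $C$-bound and continuity of $f$ on bounded sets. Calder\'on--Zygmund estimates \cite[Theorem 9.15, Lemma 9.17]{GilbargTrudinger2001} then give $\norm{y}_{W^{2,p}}\le C_p\norm{g}_{L^p}$.

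The main obstacle is the explicit linear dependence $M_0p$ of the constant. I would obtain it through the sharp $L^p$ operator-norm growth of singular integrals, which is of order $p$ for large $p$, tracked through the localization and flattening argument. This is exactly the content cited from \cite{Casas2012} and \cite[Theorem 12]{CasasTroeltzsch2009}, and I would invoke it rather than rederive it.
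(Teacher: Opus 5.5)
Your route is the same as the paper's in spirit. The paper gives no argument of its own and imports the theorem from \cite{CasasMateos2021}. Your sketches of existence (truncation, Minty--Browder, Stampacchia), uniqueness, the linear $H_0^1$ and $C(\overline\Om)$ bounds, and the strong convergence are the standard arguments behind that citation, and they are fine.

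There is one step that does not hold as you justify it, in the $W^{2,p}$ part. You claim $\norm{u-f(y_u)}_{L^\infty(\Om)}\lesssim\norm{u}_{L^\infty(\Om)}+\abs{f(0)}$ with a constant independent of $u$, ``by the uniform $C$-bound and continuity of $f$ on bounded sets''. That reasoning only gives $\norm{f(y_u)}_{L^\infty(\Om)}\le\max_{\abs{s}\le R_u}\abs{f(s)}$ with $R_u=T_p(\norm{u}_{L^p(\Om)}+\abs{f(0)})$. For an admissible superlinear nonlinearity such as $f(s)=s^3$, this bound grows like $R_u^3$. So your argument yields the $W^{2,p}$ estimate only with a constant depending on a bound for $\norm{u}_{L^\infty(\Om)}$. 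That is enough for the paper's later use on $\U$, but it does not give $M_0$ independent of $u$, as the statement asserts.

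The missing idea is to use the monotonicity of $f$ a second time, now to control $f(y_u)$ directly. Put $\beta:=f-f(0)$, so $\beta$ is nondecreasing with $\beta(0)=0$. For $q\ge2$ the function $\abs{\beta(y_u)}^{q-2}\beta(y_u)$ belongs to $H_0^1(\Om)$, because $y_u\in H_0^1(\Om)\cap L^\infty(\Om)$. Testing the state equation with it gives
\[
(q-1)\int_\Om\abs{\beta(y_u)}^{q-2}\beta'(y_u)\,A\nabla y_u\cdot\nabla y_u\,\mathrm{d}x+\norm{\beta(y_u)}_{L^q(\Om)}^q=\int_\Om(u-f(0))\abs{\beta(y_u)}^{q-2}\beta(y_u)\,\mathrm{d}x .
\]
The first term is nonnegative, so H\"older's inequality gives $\norm{\beta(y_u)}_{L^q(\Om)}\le\norm{u-f(0)}_{L^q(\Om)}$. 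Letting $q\to\infty$ yields $\norm{u-f(y_u)}_{L^\infty(\Om)}\le2\bigl(\norm{u}_{L^\infty(\Om)}+\abs{f(0)}\bigr)$, which is genuinely linear. With this in place, the $W^{2,p}$ estimate with $M_0$ independent of $u$ reduces to the $O(p)$ growth of the Calder\'on--Zygmund constant, which you, like the paper, take from the cited literature.
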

From \eqref{E2.2}, we infer the existence of a constant $C$ such that $\|y_u\|_{C(\overline{\Omega})}\leq C  \text{ for all } u\in \mathcal U$.

For $p>n/2$, we denote by $G\colon L^p(\Omega)\rightarrow Y$ the map choosing to each control the corresponding state $G(u)=y_u$, solution to \eqref{E1.3}. Differentiability of $G$ is given in the next theorem. The proof can be obtained by using the implicit function theorem, see, e.g., \cite[Theorem 1]{zbMATH06853843}.

\begin{theorem}\label{T2.4}
The control-to-state operator $G$ is of class $C^2$. For
$u,v,w\in L^p(\Omega)$, $p>n/2$, the linearized state
$z_v=G'(u)v$ is the solution of
\begin{equation}\label{E2.2}
-\Div(A\nabla z_v)+f'(y_u)z_v = v \ \textnormal{in }\Omega,\quad z_v = 0 \textnormal{ on }\Gamma,
\end{equation}
and $z_{v,w}=G''(u)(v,w)$ solves the equation
\[
-\Div(A\nabla z_{v,w}) +f'(y_u)z_{v,w} +f''(y_u)z_vz_w=0\textnormal{ in }\Omega,\quad z_{v,w}=0\textnormal{ on }\Gamma.
\]
\end{theorem}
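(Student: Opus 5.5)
The plan is to apply the implicit function theorem to the state equation viewed as an operator equation in the pair (control, state). Fix $p>n/2$ and take as state space the closed subspace
\[
Y_A:=\{y\in H_0^1(\Om)\cap C(\overline\Om):\ -\Div(A\nabla y)\in L^p(\Om)\},
\]
equipped with the graph norm $\norm{y}_{H_0^1(\Om)}+\norm{y}_{C(\overline\Om)}+\norm{\Div(A\nabla y)}_{L^p(\Om)}$. This is a Banach space. Then define
\[
F\colon Y_A\times L^p(\Om)\to L^p(\Om),\qquad F(y,u):=-\Div(A\nabla y)+f(y)-u .
\]
By Theorem \ref{T2.3}, for every $u$ there is a unique $y_u$ with $F(y_u,u)=0$, and $y_u\in Y_A$ because $-\Div(A\nabla y_u)=u-f(y_u)\in L^p(\Om)$, using the continuity of $y_u$.

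\textbf{Step 1: $F$ is of class $C^2$.} The linear part is bounded by construction, so it suffices to treat the Nemytskii operator $y\mapsto f(y)$ from $C(\overline\Om)$ into $L^\infty(\Om)\hookrightarrow L^p(\Om)$. Since $f\in C^2(\R)$, it is uniformly continuous together with $f',f''$ on bounded intervals. A Taylor expansion with uniform remainder then shows that this map is twice Fréchet differentiable, with derivatives $z\mapsto f'(y)z$ and $(z_1,z_2)\mapsto f''(y)z_1z_2$, and that these derivatives are continuous in $y$. I expect this step to be the main technical point. It is exactly where the $C(\overline\Om)$-component of the state norm is needed: with only $H_0^1$ control of $y$, the superposition operator would fail to be differentiable without growth conditions on $f$.

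\textbf{Step 2: $\partial_yF(y_u,u)$ is an isomorphism.} We have $\partial_yF(y_u,u)z=-\Div(A\nabla z)+f'(y_u)z$ from $Y_A$ to $L^p(\Om)$. Since $f'(y_u)\ge0$ is bounded, for every $v\in L^p(\Om)$ with $p>n/2$ the linear equation has a unique solution $z\in H_0^1(\Om)\cap C(\overline\Om)$, by Lax--Milgram together with Stampacchia's $L^\infty$/continuity estimate. This is the linear analogue of Theorem \ref{T2.3}. The solution bound gives $\norm{z}_{Y_A}\le C\norm{v}_{L^p(\Om)}$, so the map is bijective and bounded, and its inverse is bounded as well.

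\textbf{Step 3: conclude.} The implicit function theorem yields a $C^2$ map $u\mapsto y_u$ locally around each $u$. By uniqueness of solutions it coincides with $G$, so $G$ is $C^2$, and continuity of the embedding $Y_A\hookrightarrow Y$ transfers this to $G\colon L^p(\Om)\to Y$.

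To identify the derivatives, differentiate the identity $F(G(u),u)=0$. Differentiating once in the direction $v$ gives $-\Div(A\nabla z_v)+f'(y_u)z_v-v=0$ with $z_v\in H_0^1(\Om)$, which is \eqref{E2.2}. Differentiating again in the direction $w$, the term $f'(G(u))G'(u)v$ produces $f'(y_u)z_{v,w}+f''(y_u)z_wz_v$ by the product and chain rules, while the term $-v$ contributes nothing. This gives the stated equation for $z_{v,w}$ with homogeneous boundary data.
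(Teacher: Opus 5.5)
Your proposal is correct and follows the same route as the paper. The paper proves the statement only by pointing to the implicit function theorem applied to the state equation, citing an external reference. Your choice of the graph-norm space $Y_A$, the $C^2$ property of the Nemytskii operator on $C(\overline{\Om})$, the isomorphism property of $\partial_yF(y_u,u)$ via Lax--Milgram and Stampacchia's estimate, and the identification of $G'$ and $G''$ by differentiating $F(G(u),u)=0$ are exactly the standard details behind that citation.
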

To every $u$, the adjoint state $p_u$ that satisfies
\begin{equation}\label{E2.3}
-\Div(A\nabla p_u)
+f'(y_u) p_u=y_u - y_d \text{ in }\Omega,\quad
p_u=0 \text{ on }\Gamma.
\end{equation}
Assumption \ref{A2.1}, together with Theorem \ref{T2.3}, implies that $p_u\in H_0^1(\Omega)\cap C(\overline{\Omega})$,
and we have $\| p_u \|_{W^{2,q}(\Omega)}\leq M_0 q C$ for all $u\in \U$, for all $q>2$, where $C$ is independent of $u$. From the Sobolev embeddings, we infer
the existence of a constant $T_\infty>0$ such that $\|p_u\|_{H_0^1(\Omega)}
+\|p_u\|_{C(\overline{\Omega})} \leq T_\infty$   for all $u\in \U$. For $r>n$ and $\vartheta:=1-\frac nr\in(0,1)$, due to the Sobolev-Morrey embedding \cite[Theorem 7.26 (ii)]{GilbargTrudinger2001}, we find 
\begin{equation}\label{E2.4}
p_{\bar{u}}\in C^{1,\vartheta}(\overline{\Om}), \quad\norm{p_{\bar{u}}}_{C^{1,\vartheta}(\overline{\Om})}\le C\norm{p_{\bar{u}}}_{W^{2,r}(\Om)}.
\end{equation}
If $y_d\in L^\infty(\Om)$, the preceding argument can be applied with every finite $r>n$ and
$\pb\in C^{1,\theta}(\overline{\Om})$ for every $\theta\in(0,1)$.
We define $L_{\bar p}:=\norm{\nabla\pb}_{L^\infty(\Om)}$ and $H_p:=[\nabla\pb]_{C^{0,\vartheta}(\overline{\Om})}$; both are finite by \eqref{E2.4}. The next theorem follows from the chain rule, Theorem \ref{T2.4}, and
Assumption \ref{A2.1}.

\begin{theorem}
For $p>n/2$, the functional $J\colon L^p(\Omega)\rightarrow \mathbb{R}$ is of class $C^2$. For every $u,v,w\in L^p(\Omega)$,
\begin{equation*}
\begin{aligned}
J'(u)v
&=\int_{\Omega}p_u v \  \mathrm{d}x,\\
J''(u)(v,w)
&=\int_{\Omega}\left(1-p_u  f''(y_u)\right)z_vz_w \   \mathrm{d}x,
\end{aligned}
\end{equation*}
where $p_u$ is the solution of \eqref{E2.3}.
\end{theorem}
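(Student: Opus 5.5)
We derive the result from the chain rule applied to $J=F\circ G$, where $F\colon Y\to\R$ is given by $F(y):=\frac12\int_\Om (y-y_d)^2\,\mathrm{d}x$. By Theorem \ref{T2.4}, $G\colon L^p(\Om)\to Y$ is of class $C^2$ for $p>n/2$. The map $F$ is a continuous quadratic polynomial on $Y\hookrightarrow L^2(\Om)$, since $y_d\in L^r(\Om)\subset L^2(\Om)$ for $r>n\ge2$, so $F$ is of class $C^\infty$. Its derivatives are
\[
F'(y)z=\int_\Om (y-y_d)z\,\mathrm{d}x, \qquad F''(y)[z_1,z_2]=\int_\Om z_1z_2\,\mathrm{d}x.
\]
The chain rule then shows that $J$ is $C^2$, with
\[
J'(u)v=\int_\Om (y_u-y_d)z_v\,\mathrm{d}x
\]
and
\[
J''(u)(v,w)=\int_\Om z_vz_w\,\mathrm{d}x+\int_\Om (y_u-y_d)z_{v,w}\,\mathrm{d}x .
\]

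The adjoint state $p_u\in H_0^1(\Om)\cap C(\overline\Om)$ from \eqref{E2.3} removes the linearized states from both formulas. For $J'$, we test \eqref{E2.3} with $z_v$ and the linearized equation of Theorem \ref{T2.4} with $p_u$. Because $A$ is symmetric, both give the same bilinear form
\[
\int_\Om A\nabla p_u\cdot\nabla z_v\,\mathrm{d}x+\int_\Om f'(y_u)p_uz_v\,\mathrm{d}x,
\]
so we obtain $\int_\Om (y_u-y_d)z_v\,\mathrm{d}x=\int_\Om p_u v\,\mathrm{d}x$. For $J''$, the same argument applied to the equation for $z_{v,w}$ gives
\[
\int_\Om (y_u-y_d)z_{v,w}\,\mathrm{d}x=-\int_\Om p_u f''(y_u)z_vz_w\,\mathrm{d}x .
\]
Substituting this into the chain-rule expression for $J''$ yields the stated formula.

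The main thing to check is that these integrations by parts are justified. This holds because $z_v$, $z_{v,w}$ and $p_u$ all lie in $H_0^1(\Om)\cap C(\overline\Om)$, so each is an admissible test function in the others' weak formulations. The products are integrable for the following reasons. The coefficient $f'(y_u)$ is bounded because $y_u$ is continuous and $f\in C^2$. The right-hand side $f''(y_u)z_vz_w$ is bounded because $z_v,z_w\in C(\overline\Om)$. Finally, $v\in L^p(\Om)$ pairs with $p_u\in L^\infty(\Om)$.
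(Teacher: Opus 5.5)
Your proposal is correct and takes the same route as the paper, which justifies the statement in one line by the chain rule for $J=F\circ G$ together with Theorem~\ref{T2.4} and Assumption~\ref{A2.1}. You additionally write out the standard adjoint identities: you test \eqref{E2.3} against $z_v$ and $z_{v,w}$ and use the symmetry of $A$. This turns $\int_\Om (y_u-y_d)z_v\,\mathrm{d}x$ and $\int_\Om (y_u-y_d)z_{v,w}\,\mathrm{d}x$ into the stated formulas, which is exactly what the paper leaves implicit.
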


We fix the exponent $r>n$ from Assumption \ref{A2.1} and the
corresponding $\vartheta$. A control $\ub\in\U$ is called stationary if
\[
J'(\ub)(u-\ub)=\int_\Om \pb(u-\ub)  \ \mathrm{d}x\ge0 \quad\forall u\in\U.
\]
This is equivalent to the pointwise conditions
\begin{equation}\label{E2.5}
\pb\ge0\ \text{ on }[\ub=\alpha], \quad
  \pb=0\ \text{ on }[\alpha<\ub<\beta], \quad   \pb\le0\ \text{ on }[\ub=\beta].
\end{equation}
The tangent cone to $\U$ at $\ub$ is
\[
T_{\U}(\ub) =\bigl\{h\in L^2(\Om): h\ge0\ \text{a.e. on }[\ub=\alpha],\  h\le0\ \text{a.e. on }[\ub=\beta]\bigr\}.
\]
For every $h\in T_{\U}(\ub)$, the product $\pb h$ is nonnegative
almost everywhere by \eqref{E2.5}. The critical cone is given by
\begin{equation}\label{E2.6}
C_{\ub}:=\bigl\{h\in L^2(\Om): h\in T_{\mathcal U}(\bar u),\ h=0\ \text{a.e. on }[\pb\ne0]\bigr\}.
\end{equation}
For $\tau>0$, we use the extended critical cone
\begin{equation}\label{E2.7}
  C^\tau_{\ub}:=\left\{h\in L^2(\Om)\vert \  h\in T_{\mathcal U}(\bar u) \text{ and }h=0\ \text{a.e. on }[\abs{\pb}>\tau] \right\}.
\end{equation}
For the next result, we recall that $c(x) = 1-\bar p(x) f''(\yb(x))$.
\begin{lemma}\label{L2.6}
On $[\vert \bar p\vert <k_\varepsilon]$, we have $c\ge\eps$, and $[c<\eps]\subset [k_\varepsilon<\abs{\pb}]\subset [ k_\varepsilon \leq \vert \bar p\vert]\Subset\Om$, and $
\ c\ge-M_\eps$ on $[k_\varepsilon \leq \vert \bar p\vert]$.
\end{lemma}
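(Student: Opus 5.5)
Three claims need proof: $c\ge\eps$ on $[\abs{\pb}<k_\eps]$, the chain of inclusions ending in compact containment in $\Om$, and the lower bound $c\ge-M_\eps$. Each is a short pointwise argument using only the definitions of $m$, $c$, $k_\eps$, $M_\eps$ and the regularity of $\pb$ from Section \ref{S2}. We work under the standing convention $m>0$, so that $k_\eps=\frac{1-\eps}{m}$ is well defined and positive.

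\emph{Step 1: $c\ge\eps$ on $[\abs{\pb}<k_\eps]$.}
Since $\yb$ and $f''$ are continuous, we have $\abs{f''(\yb(x))}\le m$ for every $x\in\overline{\Om}$. Then, pointwise,
\[
c=1-\pb f''(\yb)\ge 1-\abs{\pb}\,\abs{f''(\yb)}\ge 1-m\abs{\pb}.
\]
On $[\abs{\pb}<k_\eps]$ this gives $c>1-m k_\eps=\eps$. Taking the contrapositive, $c<\eps$ forces $\abs{\pb}\ge k_\eps$.

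\emph{Step 2: the strict inclusion $[c<\eps]\subset[k_\eps<\abs{\pb}]$.}
The set $[k_\eps<\abs{\pb}]$ uses a strict inequality, so the bound $\abs{\pb}\ge k_\eps$ from Step 1 is not enough. We must exclude the level set $\abs{\pb}=k_\eps$. On that set, $c\ge 1-m\abs{\pb}=1-mk_\eps=\eps$, so $[c<\eps]$ cannot meet it. Hence $[c<\eps]\subset[k_\eps<\abs{\pb}]$. The inclusion $[k_\eps<\abs{\pb}]\subset[k_\eps\le\abs{\pb}]$ is trivial.

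\emph{Step 3: $[k_\eps\le\abs{\pb}]\Subset\Om$.}
By \eqref{E2.4}, $\pb\in C^{1,\vartheta}(\overline{\Om})$, so $K:=\{x\in\overline{\Om}:\abs{\pb(x)}\ge k_\eps\}$ is closed in the compact set $\overline{\Om}$ and hence compact. Because $\pb=0$ on $\Ga$ and $k_\eps>0$, $K$ does not meet $\Ga$, so $K\subset\Om$. A compact subset of the open set $\Om$ has positive distance to $\Ga$, which is exactly $K\Subset\Om$.

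\emph{Step 4: $c\ge-M_\eps$ on $[k_\eps\le\abs{\pb}]$.}
By definition $c\ge -c^-$ everywhere. On $[k_\eps\le\abs{\pb}]$ we also have $c^-\le\norm{c^-}_{L^\infty([k_\eps\le\abs{\pb}])}=M_\eps$. Since $c$ is continuous, this bound holds pointwise and not only almost everywhere, so $c\ge-M_\eps$ there.

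The only step needing care is Step 2, where the strict inclusion depends on handling the boundary level set $\abs{\pb}=k_\eps$ separately.
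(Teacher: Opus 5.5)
Your proposal is correct and follows the paper's proof essentially step by step; the paper gets the strict inclusion in one line from $1-\eps<\pb f''(\yb)\le m\abs{\pb}$, which is equivalent to your separate treatment of the level set $\abs{\pb}=k_\eps$. One small caveat in Step 4 concerns your pointwise upgrade, which is not needed, since the paper only uses the a.e. bound that comes directly from the definition of $M_\eps$. Continuity of $c$ alone does not turn an essential-supremum bound on a closed set, which may contain null pieces, into a pointwise bound; the pointwise claim does hold here, because points with $\abs{\pb}>k_\eps$ are interior points of $[k_\eps\le\abs{\pb}]$, while on $[\abs{\pb}=k_\eps]$ you already have $c\ge\eps$ from Step 2.
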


\begin{proof}
If $x\in[\vert \bar p\vert <k_\varepsilon]$, then $1-\bar p(x) f''(\yb(x))  >1-k_\varepsilon m
  \geq\eps$. If $1-\bar p(x) f''(\yb(x)) <\eps$, then
$1-\eps<\pb(x)f''(\yb(x))\leq \abs{\pb(x)}m$. The estimate on $[k_\varepsilon \leq \vert \bar p\vert]$ follows from the definition of $M_\eps$. The set $[k_\varepsilon\leq \abs{\pb}]$ is compact. Since $\pb=0$ on $\Ga$ and $k_\varepsilon>0$, it is disjoint from $\Ga$, thus $[k_\varepsilon \leq \vert \bar p\vert]\Subset\Om$. 
\end{proof}

\section{Estimates on the involved PDEs}
\label{S3}

\begin{assumption}\label{A3.1}
There are constants $0<\tau_1<\sigma_1$ and $0<\mu$ such that
\begin{equation}\label{E3.1}
 \mu \leq \abs{\nabla\pb(x)}
  \ \text{for every }x\in\overline{\Om}\text{ with }\tau_1\le\abs{\pb(x)}\le\sigma_1.
\end{equation}
\end{assumption}
On $[0<\abs{\pb}]$, the function $\abs{\pb}$ belongs locally to $C^{1,\vartheta}$ and $\nabla \abs{\pb}=\operatorname{sgn}(\pb)\nabla\pb$, $\abs{\nabla \abs{\pb}}=\abs{\nabla\pb}$. We recall that $\Om_s=[\abs{\pb}>s]$ and $\Om_s \Subset\Om$. We introduce the constants
\begin{equation}\label{E3.2}
  \begin{aligned}
L_*&:=\max\{L_{\bar p},\mu\}, \quad  r_*:=\min\left\{
\frac{\tau_1}{2L_*}, \left(\frac{\mu}{2H_{\bar p}}\right)^{1/\vartheta}\right\},\\
\rho_*&:=\frac{\mu r_*}{8L_*}, \quad M_*:=\frac{4  2^{\vartheta/2}H_{\bar p}}{\mu}.
  \end{aligned}
\end{equation}
If $H_{\bar p}=0$, then $\left({\mu}/{2H_{\bar p}}\right)^{1/\vartheta}$ in the definition of $r_*$ is
defined as $+\infty$. Further, $r_*,\rho_*>0$ and $\rho_*\le r_*/8$. The following lemma is standard and is, besides the uniformity with respect to the parameter $s$, well known. To make clear the independence of the appearing constants from the parameter $s$, a detailed proof is given. We refer to \cite[Section 3]{zbMATH05816515} and \cite[Section 1]{Necas2012} for related discussions.
\begin{lemma}
\label{L3.2}
For every $s\in[\tau_1,\sigma_1]$, the following hold.
\begin{enumerate}
\item[(a)]
Either $\Om_s=\Ga_s=\emptyset$, or $\Ga_s$ is a compact
$C^{1,\vartheta}$-hypersurface, $\partial\Om_s=\Ga_s$.

\item[(b)]
Let $x_0\in\Ga_s$. After translating $x_0$ to the origin and rotating the
coordinates,
\[
e_n=\frac{\nabla \abs{\pb}(x_0)}{\abs{\nabla \abs{\pb}(x_0)}}.
\]
Further, there exists $\varphi\in C^{1,\vartheta}(B'_{\rho_*})$ such that, with $\mathcal C_*:=B'_{\rho_*}\times(-r_*/2,r_*/2)$,
\begin{equation}\label{E3.3}
\begin{aligned}
\Ga_s\cap\mathcal C_*&=\{(x',x_n)\in\mathcal C_*:x_n=\varphi(x')\},\\
\Om_s\cap\mathcal C_*&=\{(x',x_n)\in\mathcal C_*:x_n>\varphi(x')\},
\end{aligned}
\end{equation}
and
\begin{equation}\label{E3.4}
\varphi(0)=0, \quad  \nabla'\varphi(0)=0, \quad \norm{\nabla'\varphi}_{L^\infty(B'_{\rho_*})}\le1, \quad [\nabla'\varphi]_{C^{0,\vartheta}(B'_{\rho_*})}\le M_*.
\end{equation}

\item[(c)]
The outward unit normal of $\Om_s$ is $
\nu_s=-\frac{\nabla \abs{\pb}}{\abs{\nabla \abs{\pb}}}$.

\item[(d)]
There is an integer $N_*$, independent of $s$, such that every $\Ga_s$ is covered by at most $N_*$ charts from (b). 
\end{enumerate}
\end{lemma}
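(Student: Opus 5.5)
Write $g:=\abs{\pb}$ and fix $s\in[\tau_1,\sigma_1]$. Since $\pb=0$ on $\Ga$ and $s\ge\tau_1>0$, the closed set $[g\ge s]$ is compact and disjoint from $\Ga$, hence lies in the open set $[\pb\ne0]$. There $g\in C^{1,\vartheta}$, with $\nabla g=\operatorname{sgn}(\pb)\nabla\pb$, and by Assumption \ref{A3.1} we have $\abs{\nabla g}\ge\mu$ on $\Ga_s$. The plan is to prove (b) first, by a quantitative implicit function theorem in which every constant comes from \eqref{E3.2}; (a), (c) and (d) then follow.

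\textbf{Part (b).} Fix $x_0\in\Ga_s$, translate it to $0$ and rotate so that $\nabla g(0)=\abs{\nabla g(0)}e_n$; rotations preserve $L_{\bar p}$ and $H_{\bar p}$. On the ball $B_{r_*}(0)$ I will check three facts.
\begin{itemize}
\item The ball stays inside $[\pb\ne0]$: for $\abs{x}<r_*\le\tau_1/(2L_*)$ we get $g(x)\ge s-L_{\bar p}r_*\ge\tau_1/2>0$, so $g$ is $C^{1,\vartheta}$ there.
\item The normal derivative stays large: $\partial_ng(x)\ge\abs{\nabla g(0)}-H_{\bar p}r_*^\vartheta\ge\mu-\mu/2=\mu/2$, using the second entry of $r_*$.
\item Similarly $\abs{\nabla'g(x)}\le H_{\bar p}\abs{x}^\vartheta\le\mu/2$, since $\nabla'g(0)=0$.
\end{itemize}
Next take $\abs{x'}<\rho_*$. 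Then $\abs{g(x',0)-s}\le L_*\rho_*=\mu r_*/8$. Since $t\mapsto g(x',t)$ is strictly increasing with slope at least $\mu/2$ on $(-r_*/2,r_*/2)$, it has a unique zero of $g(x',\cdot)-s$, located at $\abs{t}\le r_*/4<r_*/2$. Call this zero $\varphi(x')$; monotonicity gives \eqref{E3.3} with $\Om_s$ on the side $x_n>\varphi$. The classical implicit function theorem gives $\varphi\in C^{1,\vartheta}$ with
\[
\nabla'\varphi=-\frac{\nabla'g}{\partial_ng},
\]
so $\abs{\nabla'\varphi}\le(\mu/2)/(\mu/2)=1$, and $\varphi(0)=0$, $\nabla'\varphi(0)=0$. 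For the Hölder seminorm, write the quotient difference as
\[
\frac{a_1}{b_1}-\frac{a_2}{b_2}=\frac{a_1-a_2}{b_1}+a_2\,\frac{b_2-b_1}{b_1b_2}.
\]
Each term is bounded using $\abs{b_i}\ge\mu/2$ and $\abs{a_2}/\abs{b_2}\le1$. The points $(x',\varphi(x'))$ and $(y',\varphi(y'))$ are at distance at most $\sqrt2\abs{x'-y'}$, since $\varphi$ is 1-Lipschitz, which produces the factor $2^{\vartheta/2}$. Altogether this yields $[\nabla'\varphi]_{C^{0,\vartheta}}\le 4\cdot2^{\vartheta/2}H_{\bar p}/\mu=M_*$.

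\textbf{Parts (a) and (c).} Part (a) is the local graph description from (b) applied at every point of $\Ga_s$. Compactness follows because $\Ga_s$ is closed in $\overline\Om$ and avoids $\Ga$. For $\partial\Om_s=\Ga_s$: continuity gives $\partial\Om_s\subset\Ga_s$, and (b) shows that every point of $\Ga_s$ is a limit of points of $\Om_s$. If $\Om_s=\emptyset$ then $\Ga_s=\emptyset$, again by (b). Part (c) holds because $g$ increases in the direction $\nabla g$ into $\Om_s$, so the outward normal is $-\nabla g/\abs{\nabla g}$.

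\textbf{Part (d), the main obstacle.} The point is uniformity in $s$. Each chart contains the ball $B_{\rho_*}(x_0)$, because $\rho_*\le r_*/8<r_*/2$. Choose a maximal $\rho_*$-separated subset of $\Ga_s$. The balls $B_{\rho_*}$ centred at its points cover $\Ga_s$, so the corresponding charts cover $\Ga_s$. The balls $B_{\rho_*/2}$ at these points are pairwise disjoint and contained in the bounded set $\{x:\operatorname{dist}(x,\Om)<\rho_*\}$. Comparing volumes gives a bound
\[
N_*\le\frac{\abs{\{\operatorname{dist}(\cdot,\Om)<\rho_*\}}}{\omega_n(\rho_*/2)^n},
\]
which is independent of $s$. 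The care needed throughout is to keep every constant expressed only through $\tau_1,\mu,L_{\bar p},H_{\bar p},\vartheta$, and never through $s$.
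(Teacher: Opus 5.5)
Your proposal is correct and follows essentially the same route as the paper. Both use a quantitative implicit-function argument on $B_{r_*}(x_0)$ with the constants from \eqref{E3.2}: normal derivative $\ge\mu/2$, tangential part $\le\mu/2$, the $\pm\mu r_*/8$ bracketing, and the quotient splitting with the $\sqrt2$ factor giving $M_*$. Part (d) is handled in both by a maximal separated set and a volume comparison.

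One step deserves to be made explicit. Your lower bound $g\ge s-L_{\bar p}r_*$ on the ball presupposes $B_{r_*}(x_0)\subset\Om$. This follows from the same Lipschitz estimate applied along a segment to a nearest boundary point, which is the paper's $\operatorname{dist}(\Ga_s,\Ga)\ge s/L_*$. The paper also proves a bounded-overlap constant $5^n$ for the charts; this is not part of (d), but it is used later in Lemma~\ref{L3.4}.
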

\begin{proof}
(Statements $(a), (b)$, and $(c)$). Let  $s>0$ be given.
Assume that $\Gamma_s=\emptyset$ but $\Omega_s\not= \emptyset$. Then $\vert \bar p(x)\vert>s$ for some $x\in\Omega_s$. We take $y\in \Gamma$ such that $\vert x-y\vert = \operatorname{dist}(x,\Gamma)$. The point $y$ exists due to the regularity of $\Gamma$. We define $x_t:=(1-t)x+ty$, $t\in [0,1]$ and see $\vert x-x_t\vert=t\vert x-y\vert<\operatorname{dist}(x,\Gamma)$. Since $g(t):=\vert \bar p(x_t)\vert$ is continuous and $g(0)=\vert \bar p (x)\vert>s$ and $g(1)=\vert \bar p (y)\vert=0<s$, there exists $\check  t $ such that $g(\check  t)=s$ and $x_{\check t}\in \Gamma_s$. Therefore, $\Gamma_s\not=\emptyset$. On the other hand, let us now assume that $\Gamma_s\not=\emptyset$. Let $x_0\in \Gamma_s$, then $\vert \bar p (x_0)\vert = s$ and we take again $ y\in \Gamma$ such that $\vert x_0-y\vert =\operatorname{dist}(x_0,\Gamma)$. Since $\bar p(y)=0$, $s=\vert \bar p(x_0)\vert= \vert \bar p(x_0)-\bar p(y)\vert$, and 
\[
\bar p(x_0)-\bar p(y)=\int_0^1 \nabla \bar p(y+t(x_0-y))\cdot (x_0-y)\  \mathrm{d}t,
\]
we get
\begin{align*}
s&\leq \int_0^1\vert \nabla \bar p (y+t(x_0-y))\vert  \vert x_0-y\vert \  \mathrm{d}t \leq L_{\bar p}\vert x_0 -y\vert \leq L_*\operatorname{dist}(x_0,\Gamma).
\end{align*}
Taking the infimum over the $x_0\in \Gamma_s$, gives $s \leq  L_*\operatorname{dist}(\Gamma_s,\Gamma)$.
Thus, $\operatorname{dist}(\Gamma_s,\Gamma)\geq \frac{s}{L_*}\geq \frac{\tau_1}{L_*}$. Since $r_*\leq \frac{\tau_1}{2L_*}$, we have $2r_*\leq \operatorname{dist}(\Gamma_s,\Gamma)$ and $\overline{B_{r_*}(x_0)}\subset \Omega$.
For $x\in \overline{B_{r_*}(x_0)}$, $\big\vert   \bar p(x)- \bar p(x_0)\big\vert \leq L_{\bar p} \vert x-x_0\vert \leq L_{\bar p}r_*\leq \frac{\tau_1}{2}$.
On the other hand, $\vert \bar p (x_0)\vert=s\geq \tau_1$. Thus $\vert \bar p(x)-\bar p(x_0)\vert <\vert \bar p(x_0)\vert$. If $\bar p(x_0)>0$, then $\bar p(x)\geq\bar p(x_0)-\vert \bar p(x)-\bar p(x_0)\vert > 0$, and if $\bar p(x_0)<0$, then $\bar p(x)<0$. Therefore on $B_{r_*}(x_0)$, $\bar p$ has constant sign. This implies that $\nabla \vert\bar p\vert=\operatorname{sign}(\bar p(x_0))\nabla \bar p$ and $\vert \nabla \vert \bar p\vert(x)-\nabla \vert \bar p \vert(y)\vert=\vert \nabla \bar p(x)-\nabla \bar p(y)\vert \leq H_{\bar p}\vert x-y\vert^\vartheta$ for all $x,y\in B_{r_*}(x_0)$. Thus we obtain $[\nabla \vert \bar p\vert]_{C^{0,\vartheta}(B_{r_*}(x_0))}\leq H_{\bar p}$. 
We make a change of coordinates at $x_0$ such that $e_n=\frac{\nabla \vert \bar p \vert (x_0)}{ \vert \nabla \vert \bar p \vert (x_0)\vert}$,  $\vert \bar p \vert (0)=s$, $\nabla'\vert \bar p \vert(0)=0$, and $\partial_n  \vert \bar p\vert  (0)= \vert \nabla \vert \bar p \vert (0)\vert$. Since $\vert \bar p\vert (0)=s\in [\tau_1,\sigma_1]$, from Assumption \ref{A3.1}, we infer $\vert \nabla \vert \bar p\vert (0)\vert=\vert \nabla \bar p(x_0) \vert\geq \mu$ and $\partial_n\vert \bar p \vert (0)\geq \mu$. For $x\in B_{r_*}(0)$, the Hölder estimate  and the definition of $r_*$ gives
\[
\vert \nabla \vert \bar p\vert (x)-\nabla \vert \bar p\vert (0)\vert \leq  H_{\bar p}\vert x \vert^{\vartheta}\leq H_{\bar p}r_*^\vartheta \leq \frac{\mu}{2}.
\]
For the normal derivative, we calculate
\begin{align*}
    \partial_n \vert \bar p \vert (x)\geq \partial_n\vert \bar p \vert (0)-\vert \partial_n \vert \bar p\vert (x)-\partial _n\vert \bar p\vert (0)\vert \geq \mu-\vert \nabla \vert \bar p \vert (x)-\nabla \vert \bar p\vert (0)\vert\geq \frac{\mu}{2}.
\end{align*}
For the tangential part, we notice that, since $\nabla'\vert \bar p \vert (0)=0$,
\begin{align*}
\nabla'\vert \bar p \vert (x)&=\vert \nabla'\vert \bar p \vert (x)-\nabla'\vert \bar p \vert (0)\vert \leq \vert \nabla \vert \bar p \vert (x)-\nabla\vert \bar p \vert (0)\vert \leq \frac{\mu}{2}.
\end{align*}
We recall that $\mathcal C_*=B'_{\rho_*}\times \big(-\frac{r_*}{2}, \frac{r_*}{2}\big)$, and take $x=(x',x_n)\in \overline{\mathcal C_*}$. Clearly, $\vert x'\vert \leq \rho_{*}$, $\vert x_n\vert \leq \frac{r_{*}}{2}$, and $\vert x \vert^2 \leq \rho_*^2+\frac{r_*^2}{4}$. Thus, since $\rho_*\leq \frac{r_*}{8}$, $\vert x \vert^2\leq r_*^2$ and $\overline{\mathcal C_*}\subset B_{r_*}$.
Given $x'\in B_{\rho_*}'$, since $\vert \bar p\vert (0)=s$, 
\begin{align*}
\vert \vert \bar p\vert (x',0)-s\vert= \vert \vert \bar p\vert (x',0)-\vert \bar p\vert (0,0)\vert \leq L_{\bar p}\vert x'\vert <L_*\rho_*.
\end{align*}
Since $L_*\rho_*=\frac{\mu r_*}{8}$, it holds $s-\frac{\mu r_*}{8}<\vert \bar p \vert(x',0)<s+\frac{\mu r_*}{8}$. Using $\partial_n \vert \bar p \vert\geq \frac{\mu}{2}$, we obtain
\begin{align*}
\vert \bar p \vert (x',\frac{r_*}{2})&=\vert \bar p \vert(x',0)+\int_0^{r_*/2}\partial_n\vert \bar p \vert (x',t) \  \mathrm{d}t \geq \vert \bar p \vert (x',0) + \frac{\mu}{2}\frac{r_*}{2}\\ &>s-\frac{\mu r_*}{8}+\frac{\mu r_*}{4}=s+\frac{\mu r_*}{8}>s,
\end{align*}
and similar that $ \vert \bar p \vert (x',-\frac{r_*}{2})<s$. Therefore $ \vert \bar p \vert (x',-\frac{r_*}{2})<s< \vert \bar p \vert (x',\frac{r_*}{2})$. Given $x'\in B'_{\rho_*}$, we consider $t\to \vert \bar p\vert (x',t)$ which is continuous and $\partial_n \vert \bar p\vert (x',t)\geq \frac{\mu}{2}>0$. Thus it is strictly increasing on $(\frac{-r_*}{2}, \frac{r_*}{2})$. The intermediate value theorem gives a unique $\varphi(x') \in
\left(-\frac{r_*}{2},\frac{r_*}{2}\right)$ with $\vert \bar p\vert \bigl(x',\varphi(x')\bigr)=s$. Since $\partial_n \vert \bar p\vert \bigl(x',\varphi(x')\bigr)\ge\frac{\mu}{2}>0$, the implicit-function theorem gives $\varphi\in C^{1,\vartheta}(B'_{\rho_*})$. From the strict monotonicity we have $\vert \bar p \vert (x',x_n)>s
$ if only if $x_n>\varphi(x')$. Thus
\[
\Gamma_s\cap\mathcal C_*=\left\{(x',x_n)\in\mathcal C_*:x_n=\varphi(x')\right\},\qquad\Omega_s\cap\mathcal C_*=\left\{(x',x_n)\in\mathcal C_*:x_n>\varphi(x')\right\}.
\]
Because the origin corresponds to \(x_0\in\Gamma_s\), $\vert \bar p\vert (0,0)=s$. From uniqueness, we infer $\varphi(0)=0$. Differentiating $\vert \bar p \vert \bigl(x',\varphi(x')\bigr)=s$, gives $\nabla' \vert \bar p \vert + \partial_n \vert \bar p \vert  \nabla'\varphi = 0$. Therefore,
\[
\nabla'\varphi(x')=-\frac{\nabla' \vert \bar p \vert}{\partial_n \vert \bar p \vert}\bigl(x',\varphi(x')\bigr).
\]
At $x'=0$, $\nabla' \vert \bar p \vert(0)=0$, so $\nabla'\varphi(0)=0$. Finally, $|\nabla'\varphi(x')| \le \frac{|\nabla' \vert \bar p \vert|}{\partial_n \vert \bar p \vert} \le \frac{\mu/2}{\mu/2}= 1$. Hence $\varphi(0)=0$, $\nabla'\varphi(0)=0$, and $\|\nabla'\varphi\|_{L^\infty}\le1$. We define $F:=-\frac{\nabla' q}{\partial_nq}$ and take $x,y\in\mathcal C_*$. Then
\[
\begin{aligned}|F(x)-F(y)|
&\le\frac{|\nabla' \vert \bar p \vert (x)- \nabla'\vert \bar p \vert(y)|}{\partial_n \vert \bar p \vert(x)}
+ |\nabla ' \vert \bar p \vert(y)|\frac{|\partial_n \vert \bar p \vert(x)-\partial_n \vert \bar p \vert(y)|}{\partial_n \vert \bar p \vert (x)\partial_n \vert \bar p \vert (y)}.
\end{aligned}
\]
Using $\partial_n \vert \bar p \vert (x), \partial_n \vert \bar p \vert(y)\ge\frac{\mu}{2}$,  $| \nabla' \vert \bar p \vert(y)|\le\frac{\mu}{2}$, and
\[|\nabla' \vert \bar p \vert(x)-\nabla'\vert \bar p \vert(y)| \le H_{\bar p}|x-y|^\vartheta, \qquad |\partial_n \vert \bar p \vert(x)-\partial_n \vert \bar p \vert(y)| \le H_{\bar p}|x-y|^\vartheta,
\]
we get
\[
\begin{aligned}
|F(x)-F(y)|&\le \frac{2H_{\bar p}}{\mu}|x-y|^\vartheta + \frac{2H_{\bar p}}{\mu}|x-y|^\vartheta= \frac{4H_{\bar p}}{\mu}|x-y|^\vartheta. \end{aligned}
\]
Thus $[F]_{C^{0,\vartheta}(\mathcal C_*)} \le \frac{4H_p}{\mu}$. We define $G(x')=\bigl(x',\varphi(x')\bigr)$. Since $\|\nabla'\varphi\|_{L^\infty}\le1$, we have
\[
|G(x')-G(y')|^2= |x'-y'|^2 + |\varphi(x')-\varphi(y')|^2\le 2|x'-y'|^2.
\]
Therefore, $|G(x')-G(y')| \le \sqrt2  |x'-y'|$.
Since $\nabla'\varphi=F\circ G$, we obtain
\[
\begin{aligned}
|\nabla'\varphi(x')-\nabla'\varphi(y')|
&\le \frac{4H_p}{\mu} |G(x')-G(y')|^\vartheta\le \frac{4H_p}{\mu} (\sqrt2)^\vartheta |x'-y'|^\vartheta.
\end{aligned}
\]
Thus
\[
[\nabla'\varphi]_{C^{0,\vartheta}(B'_{\rho_*})}\le\frac{4  2^{\vartheta/2}H_p}{\mu}=M_*.
\]
Since $\vert \bar p\vert$ is continuous on $\overline{\Omega}$, the set $[
\vert \bar p\vert (x)\ge s ]$ is compact and $[\vert \bar p\vert (x)\ge s ]\cap\Gamma=\emptyset$. Therefore, $[\vert \bar p\vert (x)\ge s ]\Subset\Omega$. Now let $x\in\partial\Omega_s$. Because $x\in\Omega$, there are sequences $x_k\in\Omega_s$, $y_k\in\Omega\setminus\Omega_s$, both converging to $ x$. Thus $\vert \bar p\vert(x_k)>s$, $\vert \bar p\vert(y_k)\le s$. By continuity, $\vert \bar p\vert(x)\ge s$ and $\vert \bar p\vert(x)\le s$. Therefore, $\vert \bar p\vert(x)=s$, so $\partial\Omega_s\subset\Gamma_s$. If $x\in\Gamma_s$, the local graph contains points with $x_n>\varphi(x')$ and points with $x_n<\varphi(x')$. The first are in $\Omega_s$, while the second are outside $\Omega_s$. Thus, every neighborhood of $x$ meets both sets and $x\in\partial\Omega_s$. Thus $\partial\Omega_s=\Gamma_s$ and as seen above $\Gamma_s\neq \emptyset$ implies $\Omega_s\neq \emptyset $. The set $\Gamma_s$ is compact because it is the closed level set $\{\vert \bar p \vert =s\}$ inside the compact set $\overline{\Omega}$ and it is disjoint from $\Gamma$. The gradient $\nabla \vert \bar p\vert$ points in the direction in which $\vert \bar p\vert$ increases, thus into $\Omega_s$. Therefore, the inward unit normal is
$\frac{\nabla \vert \bar p\vert}{|\nabla \vert \bar p\vert|}$ and the outer unit normal is then $\nu_s=-\frac{\nabla \vert \bar p\vert}{|\nabla \vert \bar p\vert|}$.

(Statement ($d$)). We choose a maximal separated set $\{x_1,\ldots,x_{N_s}\}
\subset\Gamma_s$, that is, $|x_i-x_j|\ge
\frac{\rho_*}{4}$ for $i\neq j$ and $\Gamma_s\subset \bigcup_{j=1}^{N_s} B_{\rho_*/4}(x_j)$. The balls $B_{\rho_*/8}(x_j) $ are pairwise disjoint, if two such balls intersect, then $|x_i-x_j| < \frac{\rho_*}{8} + \frac{\rho_*}{8} = \frac{\rho_*}{4}$, giving a contradiction. We select $R>0$ such that $\Omega\subset B_R(0)$. For this choice, $B_{\rho_*/8}(x_j) \subset B_{R+\rho_*/8}(0)$ and $N_s \left|B_{\rho_*/8}\right| \le \left|B_{R+\rho_*/8}\right|$. Hence $N_s \le \left(\frac{R+\rho_*/8}{\rho_*/8} \right)^n$. The right-hand side is independent of $s$. Thus we may choose a  $N_*$ such that $ N_s\le N_*$ for every $ s$. In the local coordinates of the chart with centre at $x_j$, let $x\in B_{\rho_*/2}(0)$. Then $|x'|<\frac{\rho_*}{2}<\rho_*$. Also, $|x_n| < \frac{\rho_*}{2} \le \frac{r_*}{16} < \frac{r_*}{2}$, because $\rho_*\le r_*/8$. Therefore, $B_{\rho_*/2}(0)\subset B'_{\rho_*}\times \left( -\frac{r_*}{2}, \frac{r_*}{2}\right) = \mathcal C_*$ After translating and rotating back, $B_{\rho_*/2}(x_j)$ lies in the cylinder for the chart.  For $x\in\mathbb R^n$, with $x\in B_{\rho_*/2}(x_{j_1}), \ldots, B_{\rho_*/2}(x_{j_m})$, we  see $|x_{j_\ell}-x|<\frac{\rho_*}{2}$. For every $y\in B_{\rho_*/8}(x_{j_\ell})$, we have $|y-x| \le |y-x_{j_\ell}|+|x_{j_\ell}-x|<\frac{\rho_*}{8}+\frac{\rho_*}{2}=\frac{5\rho_*}{8}$. Thus all the pairwise disjoint balls $B_{\rho_*/8}(x_{j_\ell})$ are contained in $B_{5\rho_*/8}(x)$. We have  $m  |B_{\rho_*/8}|\le |B_{5\rho_*/8}|$ and $m\le5^n$. Therefore, every point belongs to at most $5^n$ balls $\sum_{j=1}^{N_s}
\chi_{B_{\rho_*/2}(x_j)} \le 5^n$ which is independent of $s$.
\end{proof}
In what follows constants may depend on the radius $\rho_*$ but not on $s$.

\begin{remark}\label{R3.3}
If $(\eps,\tau)$ satisfies \eqref{E2.1}, then $ [\tau,k_\varepsilon]\subset[\tau_1,\sigma_1]$. Thus the parameters from Lemma \ref{L3.2} are independent of $s$ and also of $(\eps,\tau)$.
\end{remark}
For nonempty $\Om_s$, we denote by $\gamma_s:H^1(\Om_s)\rightarrow L^2(\Ga_s)$ the trace operator. In the following, we write $b=f'(\bar y)$. We do not assume the domain to be connected in the following lemma.
\begin{lemma}\label{L3.4}
Let Assumptions \ref{A2.1} and \ref{A3.1} hold. Then there exists a constant $\widehat C>0$, independent of $s$, such that for 
$s\in[\tau_1,\sigma_1]$ and $z\in H^1(\Om_s)$ satisfying
\[
  \int_{\Om_s} \bigl(A\nabla z\cdot\nabla\phi+bz\phi\bigr)  \mathrm{d}x=0 \quad\forall\phi\in H_0^1(\Om_s),
\]
it holds
\begin{equation}\label{E3.5}
  \norm{z}_{L^2(\Om_s)}\le\widehat C\norm{\gamma_s z}_{L^2(\Ga_s)}.
\end{equation}
\end{lemma}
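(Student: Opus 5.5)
Lemma~\ref{L3.4} asks for an $L^2$-bound of a solution of the homogeneous equation $\Lop z=0$ on $\Om_s$ by its boundary trace, with a constant uniform in $s\in[\tau_1,\sigma_1]$. I would obtain it by duality with an auxiliary Dirichlet problem.

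\textbf{Duality representation.} Fix $s$ with $\Om_s\neq\emptyset$ and let $w\in H_0^1(\Om_s)$ solve
\[
-\Div(A\nabla w)+bw=z \ \text{in }\Om_s,\qquad w=0 \ \text{on }\Ga_s .
\]
This problem is uniquely solvable by Lax--Milgram, since $b=f'(\yb)\ge0$. Testing the equation of $z$ with $\phi=w$ and integrating by parts (Green's formula on the $C^{1,\vartheta}$ domain $\Om_s$, Lemma~\ref{L3.2}(a)) gives
\[
\norm{z}_{L^2(\Om_s)}^2=\int_{\Om_s}\bigl(A\nabla w\cdot\nabla z+bwz\bigr)\,\mathrm dx-\int_{\Ga_s}(A\nabla w\cdot\nu_s)\,\gamma_s z\,\mathrm dS
=-\int_{\Ga_s}(A\nabla w\cdot\nu_s)\,\gamma_s z\,\mathrm dS ,
\]
because the volume integral vanishes by the hypothesis on $z$. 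For $z\in H^1$ only, I would justify this through the $W^{2,2}$-regularity of $w$ below, or by approximation. It then remains to show
\[
\norm{A\nabla w\cdot\nu_s}_{L^2(\Ga_s)}\le \widehat C\norm{z}_{L^2(\Om_s)}
\]
with $\widehat C$ independent of $s$; dividing by $\norm{z}_{L^2(\Om_s)}$ yields \eqref{E3.5}.

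\textbf{Uniform normal-derivative bound.} This is the main obstacle. My plan is a global $H^2$ or $W^{1,q}$ ($q>2$) estimate for $w$, combined with a trace estimate.

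1. The energy estimate gives $\lambda_A\norm{\nabla w}^2_{L^2}\le\norm{z}_{L^2}\norm{w}_{L^2}$. Since $\Om_s\subset\Om$ and $w$ extends by zero to $H_0^1(\Om)$, Poincaré's inequality on $\Om$ yields $\norm{w}_{H^1(\Om_s)}\le C\norm{z}_{L^2}$ with $C$ independent of $s$.

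2. Locally, I would flatten each chart of Lemma~\ref{L3.2}(b) via $(x',x_n)\mapsto(x',x_n-\varphi(x'))$. By \eqref{E3.4} this map is bi-Lipschitz with constants $\le2$, and its $C^{1,\vartheta}$ norm is bounded by $M_*$. The transformed coefficients are uniformly elliptic and $C^{0,\vartheta}$ with uniform bounds. Boundary Schauder/$L^p$-type estimates for $C^{1,\vartheta}$ domains, namely $W^{1,q}$ estimates for divergence-form equations with Hölder coefficients, then give $\nabla w\in L^q(B_{\rho_*/4})$ with uniform constants for some $q>2$. More directly, I would aim for a bound on $\norm{\nabla w}_{L^2(\Ga_s\cap B_{\rho_*/4})}$ through a weighted/Rellich-type argument. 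A cleaner alternative is the Rellich identity: test with $(\mathbf h\cdot\nabla w)$, where $\mathbf h$ is a smooth vector field equal to $-\nu_s$ (extended) near $\Ga_s$. A natural choice is $\mathbf h=\chi\,\nabla|\pb|/|\nabla|\pb||$ with a cutoff $\chi$ supported in $[\tau_1/2\le|\pb|\le 2\sigma_1]$. Its $W^{1,\infty}$ norm is problematic because $\pb$ is only $C^{1,\vartheta}$, so I would mollify $\nabla\pb$ at scale $\rho_*$. Since $w=0$ on $\Ga_s$, we have $\nabla w=(\partial_\nu w)\nu_s$ there, and the Rellich identity yields
\[
\int_{\Ga_s}(A\nu_s\cdot\nu_s)(\mathbf h\cdot\nu_s)|\partial_\nu w|^2\,\mathrm dS\lesssim \norm{\nabla w}^2_{L^2}+\norm{z}_{L^2}\norm{\nabla w}_{L^2}.
\]
Here $\mathbf h\cdot\nu_s\ge c>0$ on $\Ga_s$ and $A\in C^{0,1}$, so all constants depend only on $\lambda_A$, $\Lambda_A$, $\norm{A}_{C^{0,1}}$, $\mu$, $L_*$, $H_{\bar p}$ and $\rho_*$. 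By Remark~\ref{R3.3} these are independent of $s$.

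3. A partition of unity subordinate to the at most $N_*$ charts of Lemma~\ref{L3.2}(d), with overlap multiplicity at most $5^n$, patches the local pieces into the global bound with constants independent of $s$.

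The delicate step is making the Rellich argument rigorous. It needs $w\in H^2$ near $\Ga_s$, which holds for $C^{1,1}$ but not automatically for $C^{1,\vartheta}$ boundaries. I would handle this by approximating $\Om_s$ from inside by level sets of a mollified $|\pb|$ (smooth domains with uniform charts), proving the estimate there, and passing to the limit. If that fails, I would fall back to the known $L^2$-Dirichlet/regularity results for Lipschitz domains (Jerison--Kenig/Nečas type), whose constants depend only on the Lipschitz character. By Lemma~\ref{L3.2} that character is uniform in $s$, which gives the uniform nontangential estimate for $\partial_\nu w$ directly.
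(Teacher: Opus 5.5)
Your proposal follows the same route as the paper. The paper also solves $\mathcal L w=z$ in $H_0^1(\Omega_s)$ and uses Green's identity with $a_{\Omega_s}(z,w)=0$ to get $\|z\|_{L^2(\Omega_s)}^2=-\int_{\Gamma_s}\gamma_s z\,\partial_{\nu_A}w\,dS$; it then takes the $s$-uniform bound $\|\partial_{\nu_A}w\|_{L^2(\Gamma_s)}\le C\|z\|_{L^2(\Omega_s)}$ from Ne\v{c}as' Rellich-type conormal estimate on Lipschitz domains, with constants controlled by the chart data of Lemma~\ref{L3.2}, which is exactly your fallback.

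If you carry out the Rellich identity yourself, one small repair is needed. The cutoff defining $\mathbf h$ must be supported where $|\nabla\bar p|$ is bounded below, for instance a tube of width comparable to $r_*$ around $\Gamma_s$ given by the charts. Assumption~\ref{A3.1} gives no such bound on the part of $\Omega_s$ where $|\bar p|>\sigma_1$. The sign does not matter, only $|\mathbf h\cdot\nu_s|\ge c$; your choice actually gives $\mathbf h\cdot\nu_s=-1$.
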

\begin{proof}
We take $s\in[\tau_1,\sigma_1]$. If $\Omega_s=\emptyset$, there is
nothing to prove. For $\Om_s\not\equiv \emptyset$, we have from Lemma \ref{L3.2}, that $\Om_s\Subset\Omega$ and $\Ga_s:=\partial \Om_s$, where $\Ga_s$ is Lipschitz. By Lemma \ref{L3.2}, there are numbers $\rho_*,r_*>0$ and
$N_*\in\mathbb N$, independent of $s$, with the following property. For every $x_0\in\Gamma_s$, after a change of coordinates, $\Gamma_s$ is the graph of a function $\varphi$ in  $B'_{\rho_*}\times(-r_*/2,r_*/2)$ and $\|\nabla'\varphi\|_{L^\infty(B'_{\rho_*})}\leq1$. Further, $\Gamma_s$ is covered by at most $N_*$ clinders. Using the smaller cylinders in the construction from Lemma \ref{L3.2}, their intersection is bounded by $5^n$. A subordinate partition of unity may can be chosen so that its first derivatives are bounded by $C/\rho_*$, with $C$ independent of $s$. We take $v\in H^1_0(\Om_s)$ and extend this function to $\Omega$ by
\[
    \tilde v(x)=\begin{cases}
        v(x), & x \in \Om_s,\\
        0,  & x\in \Omega\setminus \Om_s.
    \end{cases}
\]
From construction, $\tilde v\in H^1_0(\Omega)$, $\| v\|_{L^2(\Om_s)}=\| \tilde v \|_{L^2(\Omega)}$ and $\|\nabla v\|_{L^2(\Om_s)}=\| \nabla \tilde v \|_{L^2(\Omega)}$. We apply the Poincar\'e inequality to $ \tilde v$
\begin{equation}\label{E3.6}
\norm{v}_{L^2(\Om_s)}=\norm{\widetilde v}_{L^2(\Om)}\le C_{\Om}\norm{\nabla\widetilde v}_{L^2(\Om)} =C_{\Om}\norm{\nabla v}_{L^2(\Om_s)}.
\end{equation}
Therefore, for $v\in H^1_0(\Om_s)$, $\norm{v}_{L^2(\Om_s)}\leq C_\Omega \norm{\nabla v}_{L^2(\Om_s)}$, where $C_\Omega$ does not depend on $\Om_s$. Thus $\| v\|_{H^1(\Om_s)}^2= \| v \|_{L^2(\Om_s)}^2 +\| \nabla v\|_{L^2(\Om_s)}^2\leq (1+C_\Omega^2) \| \nabla v \|_{L^2(\Om_s)}^2$. We consider the bilinear form
\[
a_{\Om_s}(u,v):=\int_{\Om_s}\bigl(A\nabla u\cdot\nabla v+buv\bigr) \  \mathrm{d}x.
\]
This bilinear from is continuous on $H^1(\Omega_s)\times H^1(\Omega_s)$. Since $A$ is uniformly elliptic and $b\ge0$ on $\Omega$,
\[
a_{\Om_s}(v,v)\ge\lambda_A\norm{\nabla v}_{L^2(\Om_s)}^2\ge\frac{\lambda_A}{1+C_{\Om}^2}\norm{v}_{H^1(\Om_s)}^2\quad\forall v\in H_0^1(\Om_s).
\]
We define $a_0:=\frac{\lambda_A}{1+C_\Omega^2}$ and $C_{A,b}:=\max\{ \|A\|_{L^\infty(\Omega)}, \|b\|_{L^\infty(\Omega)}\}$. For $u,v\in H^1_0(\Om_s)$, we estimate  $\vert a_{\Om_s}(u,v)\vert \leq C_{A,b}\|  u \|_{H^1_0(\Om_s)} \|v\|_{H^1_0(\Om_s)}$.
Given $z\in L^2(\Om_s)$, the weak formulation of the equation $\mathcal L w=z$ on $\Om_s$ and $w=0$ on $\Ga_s$ is
\[
a_{\Om_s}(w,\phi)=\int_{\Om_s}z\phi  \mathrm{d}x\quad\forall\phi\in H_0^1(\Om_s).
\]
For $v \in H^1_0(\Om_s)$ with $\mathcal Lv=0$ it is clear that $v=0$, which shows that $0$ is not and eigenvalue of $\mathcal L$. Since $A$ is symmetric, $\mathcal L=\mathcal L^*$. We estimate
\[
\Big \vert \int_{\Om_s}z\phi\ \mathrm{d}x \Big \vert \leq C_\Omega \|z\|_{L^2(\Om_s)} \| \phi\|_{H^1_0(\Om_s)}.
\]
Lax-Milgram lemma implies, due to the the coercivity of $a_v$ and the above estimate, the existence of a solution $w\in H^1_0(\Om_s)$. Let $h\in H^1(\Gamma_s)$, we employ \cite[Lemma 1.1]{Necas2012} to infer the existence of a linear operator $E_s:H^1(\Gamma_s)\rightarrow H^1(\Omega_s)$ such that $\gamma_s(E_sh)=h$ and 
\begin{equation}\label{E3.7}
\|E_sh\|_{H^1(\Omega_s)}\leq C_E \| h \|_{H^1(\Gamma_s)}.
\end{equation}
Observing the proof of \cite[Lemma 1.1]{Necas2012}, we realize that the result is obtained by representing the boundary in Lipschitz charts, extending the boundary data, and summing the local extensions with a partition of unity. The relevant constants depend on the Lipschitz constants of the  charts, the number of the charts, and the bounds for the partition of unity. By Lemma \ref{L3.2}, these terms are uniformly bounded for $s\in[\tau_1,\sigma_1]$. Thus, the constant in \eqref{E3.7}, $C_E>0$, is independent of $s$. If $\Omega_s$ consists of finitely many disconnected sets, we consider $\Omega_s=\bigcup_\ell V_\ell$, $h_\ell:=h|_{\partial V_\ell}$, and $E_sh|_{V_\ell}:=E_{s,\ell}h_\ell$.The constant $C_E$ is valid for each component due to Lemma \ref{L3.2} and
\[
\|E_sh\|_{H^1(\Omega_s)}^2=\sum_\ell \|E_{s,\ell}h_\ell\|_{H^1(V_\ell)}^2\le C_E^2 \sum_\ell \|h_\ell\|_{H^1(\partial V_\ell)}^2=C_E^2 \|h\|_{H^1(\Gamma_s)}^2.
\]
Let $F\in L^2(\Omega_s)$, $h\in H^1(\Gamma_s)$. We consider the solution of the equation $\mathcal Lv=F$ in $\Omega_s$, $\gamma_sv=h$ on $\Gamma_s$, take $E_sh\in H^1(\Omega_s)$ and write $v=E_sh+v_0$, $v_0\in H_0^1(\Omega_s)$. Then $v$ solves the above Dirichlet problem if and only if $v_0$ satisfies
\[
a_{\Omega_s}(v_0,\phi) =\int_{\Omega_s}F\phi  \mathrm{d}x-a_{\Omega_s}(E_sh,\phi)\quad \forall\phi\in H_0^1(\Omega_s).
\]
For every \(\phi\in H_0^1(\Omega_s)\), the Poincaré inequality gives
\[
\begin{aligned}
\left| \int_{\Omega_s}F\phi  \mathrm{d}x \right|\le \|F\|_{L^2(\Omega_s)} \|\phi\|_{L^2(\Omega_s)}\le C_\Omega \|F\|_{L^2(\Omega_s)} \|\phi\|_{H^1(\Omega_s)}.
\end{aligned}
\]
The continuity of the bilinear form and the above estimate give
\[
\begin{aligned}
|a_{\Omega_s}(E_sh,\phi)|\le C_{A,b}\|E_sh\|_{H^1(\Omega_s)} \|\phi\|_{H^1(\Omega_s)}\le C_{A,b}C_E\|h\|_{H^1(\Gamma_s)}\|\phi\|_{H^1(\Omega_s)}.
\end{aligned}
\]
Thus, the right-hand side functional satisfies
\[
\begin{aligned}
\left|\int_{\Omega_s}F\phi\mathrm{d}x-a_{\Omega_s}(E_sh,\phi)\right|\le\Bigl(C_\Omega\|F\|_{L^2(\Omega_s)}+C_{A,b}C_E\|h\|_{H^1(\Gamma_s)} \Bigr)\|\phi\|_{H^1(\Omega_s)}.
\end{aligned}
\]
Using $a_{\Omega_s}(\phi,\phi)
\ge a_0\|\phi\|_{H^1(\Omega_s)}^2$,
Lax–Milgram lemma yields a unique \(v_0\in H_0^1(\Omega_s)\) and
\[
\|v_0\|_{H^1(\Omega_s)}\le \frac{1}{a_0}\left(C_\Omega\|F\|_{L^2(\Omega_s)}
+C_{A,b}C_E\|h\|_{H^1(\Gamma_s)}\right).
\]
Since \(v=E_sh+v_0\), we obtain
\[
\begin{aligned}
\|v\|_{H^1(\Omega_s)}\le \frac{C_\Omega}{a_0}\|F\|_{L^2(\Omega_s)}+C_E\left(1+\frac{C_{A,b}}{a_0}
\right)\|h\|_{H^1(\Gamma_s)}.
\end{aligned}
\]
Thus, $\| v\|_{H^1(\Omega_s)}\le C_1 \left(\|F\|_{L^2(\Omega_s)}+\|h\|_{H^1(\Gamma_s)}\right)$, $C_1:=\max\left\{\frac{C_\Omega}{a_0}, \ C_E\left(1+\frac{C_{A,b}}{a_0}\right)\right\}$. We consider the solution \(w\in H_0^1(\Omega_s)\) of
\[
a_{\Omega_s}(w,\phi)=\int_{\Omega_s}z\phi  \mathrm{d}x \quad \forall\phi\in H_0^1(\Omega_s).
\]
We recall that, $\mathcal L^*=\mathcal L$, that zero is not an eigenvalue for the Dirichlet problem and apply the conormal-derivative estimate from
\cite[Section 5.1.2, Lemma 1.3, (5.7)]{Necas2012}. It gives a weak conormal derivative $g_s:=\partial_{\nu_A}w\in L^2(\Gamma_s)$ satisfying $\|g_s\|_{L^2(\Gamma_s)} \le C_{\nu} \|z\|_{L^2(\Omega_s)}$ and its proof shows that its constant depends only on the ellipticity and coefficient bounds, the radii of the boundary cylinders, the bound of the slope for the graph functions, the
number and overlap of the cylinders, the derivative bound for the partition of unity, and the norm of the Dirichlet map. These quantities are bounded independently of $s$. If $\Omega_s$ is disconnected, write $\Omega_s=\bigcup_{\ell=1}^{m_s}V_{s,\ell}$. By Lemma 3.1, the number \(m_s\) is finite and bounded. For $z_{s,\ell}:=z|_{V_{s,\ell}}$, $w_{s,\ell}:=w|_{V_{s,\ell}}$, we have that on each part, $\mathcal Lw_{s,\ell}=z_{s,\ell}$, $w_{s,\ell}=0$, on $\partial V_{s,\ell}$. The part $V_{s,\ell}$ has the same bounds. Therefore, $g_{s,\ell}:=\partial_{\nu_A}w_{s,\ell}\in L^2(\partial V_{s,\ell})$ and $\|g_{s,\ell}\|_{L^2(\partial V_{s,\ell})} \le C_{\nu} \|z_{s,\ell}\|_{L^2(V_{s,\ell})}$. Defining $g_s$ component-wise, we obtain
\[
\|g_s\|_{L^2(\Gamma_s)}^2=\sum_{\ell=1}^{m_s} \|g_{s,\ell}\|_{L^2(\partial V_{s,\ell})}^2\le C_{\nu}^2 \sum_{\ell=1}^{m_s} \|z_{s,\ell}\|_{L^2(V_{s,\ell})}^2= C_{\nu}^2 \|z\|_{L^2(\Omega_s)}^2.
\]
Since $z\in H^1(\Omega_s)$ satisfies $a_{\Omega_s}(z,\phi)=0$ for all $\phi\in H_0^1(\Omega_s)$, the Green identity \cite[Section 5.1.3, equation (5.21)]{Necas2012}, applied to $z$ and $w$, gives
\[
\|z\|_{L^2(\Omega_s)}^2=-\int_{\Gamma_s} (\gamma_sz)g_s  dS+a_{\Omega_s}(z,w).
\]
Since $a_{\Omega_s}(z,w)=0$,  $\|z\|_{L^2(\Omega_s)}^2=-\int_{\Gamma_s}(\gamma_sz)g_s \ dS$. By Cauchy–Schwarz,
\[
\begin{aligned}
\|z\|_{L^2(\Omega_s)}^2\le\|\gamma_sz\|_{L^2(\Gamma_s)}\|g_s\|_{L^2(\Gamma_s)}\le C_{\nu} \|\gamma_sz\|_{L^2(\Gamma_s)}\|z\|_{L^2(\Omega_s)}.
\end{aligned}
\]
If \(z=0\), the conclusion is trivial. Otherwise, $\|z\|_{L^2(\Omega_s)} \le C_{\nu} \|\gamma_sz\|_{L^2(\Gamma_s)}$.
\end{proof}

\begin{proposition}
\label{P3.5}
Let Assumptions \ref{A2.1} and \ref{A3.1} be satisfied. Let
$(\eps,\tau)$ satisfy \eqref{E2.1}.
Let $\widehat C$ be the constant from Lemma \ref{L3.4} and $C_0:=\widehat C^2L_*$. Then for $h\in C_{\ub}^\tau$,
\begin{equation}\label{E3.8}
\norm{\zh}_{L^2([k_\varepsilon \leq \vert \bar p\vert])}^2\le\frac{C_0}{k_\varepsilon-\tau} \norm{\zh}_{L^2([\vert \bar p\vert <k_\varepsilon])}^2,
\end{equation}
and for $ \eta:=\frac{C_0}{(k_\varepsilon-\tau)+C_0}$, 
\begin{equation}\label{E3.9}
\norm{\zh}_{L^2([k_\varepsilon \leq \vert \bar p\vert])}^2\le \eta\norm{\zh}_{L^2(\Om)}^2.
\end{equation}
\end{proposition}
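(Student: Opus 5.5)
The plan is to combine Lemma \ref{L3.4} on every intermediate level set with the coarea formula on the transition layer $[\tau<\abs{\pb}<k_\varepsilon]$.

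Fix $h\in C^\tau_{\ub}$ and write $z=\zh$. Since $h=0$ a.e. on $\Om_\tau=[\abs{\pb}>\tau]$, for every $s\in(\tau,k_\varepsilon]$ the restriction $z|_{\Om_s}\in H^1(\Om_s)$ satisfies $a_{\Om_s}(z,\phi)=0$ for all $\phi\in H^1_0(\Om_s)$. This follows by testing the weak form of the linearized equation with the zero extension of $\phi$, exactly as in the proof of Lemma \ref{L3.4}. Since $s\in[\tau_1,\sigma_1]$ by Remark \ref{R3.3}, Lemma \ref{L3.4} yields, with $\widehat C$ independent of $s$,
\[
\norm{z}_{L^2([k_\varepsilon\le\abs{\pb}])}^2\le \norm{z}_{L^2(\Om_s)}^2\le \widehat C^2\int_{\Ga_s}\abs{z}^2\,\mathrm{d}S .
\]
The first inequality holds because $\Om_s\supset[k_\varepsilon\le\abs{\pb}]$ for $s<k_\varepsilon$. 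If $\Om_s=\emptyset$ for some such $s$, then the left side vanishes and there is nothing to prove. Here $z$ is continuous (indeed $z\in W^{2,2}\cap C(\overline\Om)$), so its trace on $\Ga_s$ is its restriction.

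Next we integrate this inequality in $s$ over $(\tau,k_\varepsilon)$, which gives
\[
(k_\varepsilon-\tau)\norm{z}_{L^2([k_\varepsilon\le\abs{\pb}])}^2\le \widehat C^2\int_\tau^{k_\varepsilon}\int_{\Ga_s}\abs{z}^2\,\mathrm{d}S\,\mathrm{d}s .
\]
On the open set $[\tau<\abs{\pb}<k_\varepsilon]$, the function $\abs{\pb}$ is $C^1$ with $\abs{\nabla\abs{\pb}}=\abs{\nabla\pb}\le L_{\pb}\le L_*$. The coarea formula applied to $\abs{z}^2\abs{\nabla\pb}$ therefore gives
\[
\int_\tau^{k_\varepsilon}\int_{\Ga_s}\abs{z}^2\,\mathrm{d}S\,\mathrm{d}s=\int_{[\tau<\abs{\pb}<k_\varepsilon]}\abs{z}^2\abs{\nabla\pb}\,\mathrm{d}x\cdot\frac{1}{\text{(weight)}} .
\]
More precisely, $\int_{[\tau<\abs{\pb}<k_\varepsilon]}g\abs{\nabla\pb}\,\mathrm{d}x=\int_\tau^{k_\varepsilon}\int_{\Ga_s}g\,\mathrm{d}S\,\mathrm{d}s$. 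With $g=\abs{z}^2$, the double integral is at most $L_*\norm{z}^2_{L^2([\abs{\pb}<k_\varepsilon])}$. Dividing by $k_\varepsilon-\tau$ yields \eqref{E3.8} with $C_0=\widehat C^2L_*$. The gradient lower bound $\mu$ is not needed here; it enters only through Lemma \ref{L3.2} and Lemma \ref{L3.4}, which make $\widehat C$ uniform in $s$ and guarantee that each $\Ga_s$ is a $C^{1,\vartheta}$ hypersurface, so that the surface measure agrees with $\mathcal H^{n-1}$.

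For \eqref{E3.9}, set $a:=\norm{z}^2_{L^2([k_\varepsilon\le\abs{\pb}])}$ and $b:=\norm{z}^2_{L^2([\abs{\pb}<k_\varepsilon])}$, so that $a+b=\norm{z}_{L^2(\Om)}^2$. From $a\le\frac{C_0}{k_\varepsilon-\tau}b$ we get $(k_\varepsilon-\tau)a\le C_0(\norm{z}^2-a)$. This gives $a\bigl((k_\varepsilon-\tau)+C_0\bigr)\le C_0\norm{z}^2$, that is, $a\le\eta\norm{z}^2_{L^2(\Om)}$.

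The main obstacle is the coarea step. It requires justifying that the $s$-integral of the traces equals the weighted volume integral, and that $s\mapsto\int_{\Ga_s}\abs{z}^2\,\mathrm{d}S$ is measurable. I would handle this by working with continuous $z$ and the $C^1$ function $\abs{\pb}$ on the open layer, where the classical coarea formula for Lipschitz maps applies directly. A secondary point is that $\Ga_{k_\varepsilon}$ and the levels near $\tau$ cause no trouble, since they are null sets in $s$.
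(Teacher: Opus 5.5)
Your proposal is correct and matches the paper's proof step for step. You apply Lemma~\ref{L3.4} on each $\Om_s$ with $s\in(\tau,k_\varepsilon)$, integrate over $s$, use the coarea formula with $\abs{\nabla\abs{\pb}}\le L_{\pb}\le L_*$ on $[\tau<\abs{\pb}<k_\varepsilon]$ (identifying traces with restrictions via continuity of $z$), and derive \eqref{E3.9} from \eqref{E3.8} by the same algebra; the one defect is the display with the factor ``$1/(\text{weight})$'', which is wrong as written and should be replaced by the identity you give right after it.
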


\begin{proof}
Let $z:=\zh$. If $[k_\varepsilon \leq \vert \bar p\vert]=\emptyset$, both claims are trivial. We assume that $[k_\varepsilon \leq \vert \bar p\vert]\ne\emptyset$. By definition, the function $h$ vanishes on $[\vert \bar p \vert >\tau]$. Hence
\begin{equation}\label{E3.10}
\Lop z=0 \quad\text{weakly in }\Om_s\quad\text{for every }s\ge\tau.
\end{equation}
For $s\in(\tau,k_\varepsilon)$, it holds $[k_\varepsilon \leq \vert \bar p\vert]\subset\Om_s$. Lemma \ref{L3.4} and \eqref{E3.10} therefore
give
\begin{equation}\label{E3.11}
\norm{z}_{L^2([k_\varepsilon \leq \vert \bar p\vert])}^2 \le\norm{z}_{L^2(\Om_s)}^2 \le\widehat C^2 \norm{\gamma_s(z)}_{L^2(\Ga_s)}^2.
\end{equation}
We have that
$z\in H_0^1(\Om)\cap C(\overline{\Om})$, $\abs{\pb}\in W^{1,\infty}(\Om)$ and
\begin{equation}\label{E3.12}
\abs{\nabla\pb}\le L_{\bar p}\quad\text{a.e. on } [\tau<\vert \bar p\vert<k_\varepsilon].
\end{equation}
The coarea formula \cite{EvansGariepy2015} (see also Theorem \ref{coarea} in the Appendix)  gives
\begin{equation}\label{E3.13}
\int_\tau^{k_\varepsilon}\norm{\gamma_s(z)}_{L^2(\Ga_s)}^2  ds =\int_{[\tau<\vert \bar p\vert<k_\varepsilon]}\abs{z}^2\abs{\nabla \vert \bar p\vert}  \mathrm{d}x.
\end{equation}
To see this, we verify the assumptions of Theorem \ref{coarea}. By the
regularity of the adjoint $\bar p\in C^{1,\vartheta}(\overline\Omega)
\subset W^{1,\infty}(\Omega)$ and $|\bar p|\in W^{1,\infty}(\Omega)$ 
and $\nabla|\bar p|= \operatorname{sgn}(\bar p)\nabla\bar p$ a.e. on $[\bar p\neq0]$. We extend $\bar p$ globally by setting $\bar p=0$ outside of $\Omega$ and denote it in the same way. We know that $|\nabla|\bar p|| =|\nabla\bar p| \leq L_{\bar p}$ a.e. on $[\tau<|\bar p|<k_\varepsilon]$ for $\tau>0$. Further, $z\in L^2(\Omega)$, and thus $\psi(x) :=|z(x)|^2\chi_{[\tau<|\bar p(x)|<k_\varepsilon]}$ is nonnegative and measurable. It  satisfies
\[
\begin{aligned}
\int_{\Omega}
\psi(x)|\nabla|\bar p|(x)|\ \mathrm{d}x &=\int_{[\tau<|\bar p|<k_\varepsilon]}|z|^2|\nabla|\bar p||\ \mathrm{d}x\leq L_{\bar p}\|z\|_{L^2(\Omega)}^2<\infty.
\end{aligned}
\]
Thus the assumptions for the coarea formula are satisfied and with $F=|\bar p|$ and $\quad \psi
=|z|^2\chi_{[\tau<|\bar p|<k_\varepsilon]}$, 
we obtain
\begin{align}
\label{E3.14}
&\int_{[\tau<|\bar p|<k_\varepsilon]}|z(x)|^2|\nabla|\bar p|(x)| \mathrm{d}x=\int_{\tau}^{k_\varepsilon} \left(\int_{[|\bar p|=s]}|z(x)|^2\ d\mathcal H^{n-1}(x)\right)ds.
\end{align}
For every $s\in(\tau,k_\varepsilon)$, by definition, $\Gamma_s=[|\bar p(x)|=s]$. The function $z$ has a continuous representative and
$\Gamma_s$ is a $C^{1,\vartheta}$ hypersurface. Therefore, the trace of $z|_{\Omega_s}$ agrees
$\mathcal H^{n-1}$-almost everywhere with the pointwise restriction of $z$ to $\Gamma_s$. Thus,
\[
\int_{\Gamma_s}|z|^2\ d\mathcal H^{n-1} = \|\gamma_s(z|_{\Omega_s})\|_{L^2(\Gamma_s)}^2.
\]
Identity \eqref{E3.14} can then be written as
\begin{equation}
\label{E3.15}
\int_{\tau}^{k_\varepsilon}\|\gamma_s(z|_{\Omega_s})\|_{L^2(\Gamma_s)}^2\ ds= \int_{[\tau<|\bar p|<k_\varepsilon]} |z|^2|\nabla|\bar p||\ \mathrm{d}x,
\end{equation}
and we infer
\[
\int_{\tau}^{k_\varepsilon} \|\gamma_s(z|_{\Omega_s})\|_{L^2(\Gamma_s)}^2\ ds\leq L_{\bar p}\|z\|_{L^2([ \vert \bar p\vert<k_\varepsilon])}^2.
\]
Integrating \eqref{E3.11} over $(\tau,k_\varepsilon)$, using
\eqref{E3.13}, \eqref{E3.12}, and
$L_{\bar p}\le L_*$, we obtain
\begin{align*}
(k_\varepsilon-\tau)\norm{z}_{L^2([k_\varepsilon \leq \vert \bar p\vert])}^2&\le\widehat C^2\int_{\{\tau<\vert \bar p\vert<k_\varepsilon\}}\abs{z}^2\abs{\nabla \vert \bar p\vert}  \mathrm{d}x \le\widehat C^2L_*\norm{z}_{L^2([\vert\bar p\vert <k_\varepsilon])}^2.
\end{align*}
This proves \eqref{E3.8}. Substituting$\norm{z}_{L^2([\vert \bar p\vert <k_\varepsilon])}^2=\norm{z}_{L^2(\Om)}^2-\norm{z}_{L^2([k_\varepsilon \leq \vert \bar p\vert])}^2$  into \eqref{E3.8} gives \eqref{E3.9}.
\end{proof}

\section{The main result}\label{S4}
\begin{proof}[Proof of Theorem \ref{T1.1}]
Fix $h\in C_{\ub}^\tau$ and set $z:=\zh$. We estimate
\begin{align*}
J''(\ub)[h,h]&=\int_{[\vert \bar p\vert <k_\varepsilon]}c  z^2  \mathrm{d}x+ \int_{[k_\varepsilon \leq \vert \bar p\vert]}c  z^2  \mathrm{d}x\ge\eps\norm{z}_{L^2([\vert \bar p\vert <k_\varepsilon])}^2 -M_\eps\norm{z}_{L^2([k_\varepsilon \leq \vert \bar p\vert])}^2\\
&=\eps\norm{z}_{L^2(\Om)}^2-(\eps+M_\eps)\norm{z}_{L^2([k_\varepsilon \leq \vert \bar p\vert])}^2.
\end{align*}
Proposition \ref{P3.5} gives $\norm{z}_{L^2([k_\varepsilon \leq \vert \bar p\vert])}^2\le \eta\norm{z}_{L^2(\Om)}^2$.
Further,
\begin{align*}
J''(\ub)[h,h]&\ge\bigl[\eps-(\eps+M_\eps) \eta\bigr] \norm{z}_{L^2(\Om)}^2=\frac{\eps(k_\varepsilon-\tau)-C_0M_\eps}{(k_\varepsilon-\tau)+C_0} \norm{z}_{L^2(\Om)}^2.
\end{align*}
\end{proof}
In absence of the condition on the gradient of the adjoint, we obtain the following weaker result. \begin{theorem}
\label{T4.1} Let Assumption \ref{A2.1} be satisfied and let $\bar u\in\mathcal U$ be stationary. Let $\widehat C_0 := \frac{C_{\Omega}^2\Lambda_A L_{\bar p}^2}{\lambda_A}$. Then, for every $\tau>0$ with $\tau<k_\varepsilon$ and $h\in C_{\bar u}^{\tau}$,
\begin{equation}
\label{E4.1}
\|z_{\bar u,h}\|_{L^2([k_\varepsilon \leq \vert \bar p\vert])}^2
\leq \frac{\widehat C_0}{(k_\varepsilon-\tau)^2}\|z_{\bar u,h}\|_{L^2([\vert \bar p\vert <k_\varepsilon])}^2.
\end{equation}
If $\varepsilon(k_\varepsilon-\tau)^2>\widehat C_0 M_\varepsilon$, then $J''(\bar u)[h,h]
\geq\kappa_{\varepsilon,\tau}\|z_{\bar u,h}\|_{L^2(\Omega)}^2$ for all $h\in C_{\bar u}^{\tau}$, where $\kappa_{\varepsilon,\tau}:= \frac{\varepsilon(k_\varepsilon-\tau)^2-\widehat C_0 M_\varepsilon}{(k_\varepsilon-\tau)^2+\widehat C_0}>0$.
\end{theorem}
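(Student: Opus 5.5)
We prove \eqref{E4.1} with a Caccioppoli-type cut-off argument in place of the level-set trace estimate of Lemma \ref{L3.4}. The cut-off is built from $\abs{\pb}$ itself, so no lower bound on $\abs{\nabla\pb}$ is needed. Fix $h\in C^\tau_{\ub}$ and write $z:=\zh$. Since $h=0$ on $[\abs{\pb}>\tau]$, we have $\Lop z=0$ weakly in $\Om_\tau$, exactly as in \eqref{E3.10}. Define the piecewise linear function $\psi(t):=\min\{1,\max\{0,(t-\tau)/(k_\eps-\tau)\}\}$ and set $\eta:=\psi(\abs{\pb})$. Then $\eta\in W^{1,\infty}(\Om)$, $\eta=1$ on $[k_\eps\le\abs{\pb}]$, $\eta=0$ on $[\abs{\pb}\le\tau]$, and
\[
\abs{\nabla\eta}\le \frac{L_{\bar p}}{k_\eps-\tau}\chi_{[\tau<\abs{\pb}<k_\eps]}.
\]
Because $\pb=0$ on $\Ga$, the support of $\eta$ is compactly contained in $\Om$ (compare Lemma \ref{L2.6}).

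The key step is to test the homogeneous equation in $\Om_\tau$ with $\phi:=\eta^2 z\in H^1_0(\Om_\tau)$. This gives
\[
\int_\Om \eta^2 A\nabla z\cdot\nabla z+b\eta^2z^2\,\mathrm dx=-2\int_\Om \eta z\,A\nabla z\cdot\nabla\eta\,\mathrm dx .
\]
On the right-hand side we apply Cauchy--Schwarz in the $A$-inner product and Young's inequality. Dropping the term with $b\ge0$, we obtain
\[
\int_\Om A\nabla(\eta z)\cdot\nabla(\eta z)\,\mathrm dx=\int_\Om \eta^2A\nabla z\cdot\nabla z+2\eta zA\nabla z\cdot\nabla\eta+z^2A\nabla\eta\cdot\nabla\eta\,\mathrm dx\le \int_\Om z^2 A\nabla\eta\cdot\nabla\eta\,\mathrm dx .
\]
Equivalently, one may use the standard identity that follows from the test function $\eta^2z$. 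The left-hand side is bounded below by $\lambda_A\norm{\nabla(\eta z)}^2_{L^2(\Om)}$. The right-hand side is bounded above by $\Lambda_A L_{\bar p}^2(k_\eps-\tau)^{-2}\norm{z}^2_{L^2([\tau<\abs{\pb}<k_\eps])}$.

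Since $\eta z\in H^1_0(\Om)$, the Poincar\'e inequality \eqref{E3.6} on $\Om$ applies. Together with $\eta=1$ on $[k_\eps\le\abs{\pb}]$ it yields
\[
\norm{z}^2_{L^2([k_\eps\le\abs{\pb}])}\le\norm{\eta z}^2_{L^2(\Om)}\le C_\Om^2\norm{\nabla(\eta z)}^2_{L^2(\Om)}\le \frac{C_\Om^2\Lambda_AL_{\bar p}^2}{\lambda_A(k_\eps-\tau)^2}\norm{z}^2_{L^2([\abs{\pb}<k_\eps])}.
\]
This is \eqref{E4.1} with the stated constant $\widehat C_0$.

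The coercivity claim then follows word for word as in the proof of Theorem \ref{T1.1}. As in Proposition \ref{P3.5}, substitute $\norm{z}^2_{L^2([\abs{\pb}<k_\eps])}=\norm{z}^2_{L^2(\Om)}-\norm{z}^2_{L^2([k_\eps\le\abs{\pb}])}$ into \eqref{E4.1}. This gives $\norm{z}^2_{L^2([k_\eps\le\abs{\pb}])}\le\eta'\norm{z}^2_{L^2(\Om)}$ with $\eta'=\widehat C_0/((k_\eps-\tau)^2+\widehat C_0)$. Combining this with Lemma \ref{L2.6} gives $J''(\ub)[h,h]\ge[\eps-(\eps+M_\eps)\eta']\norm{z}^2_{L^2(\Om)}$, and simplifying the bracket yields $\kappa_{\eps,\tau}$.

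The only delicate point is the justification of the test function. One has to check that $\eta^2z\in H^1_0(\Om_\tau)$: this holds because $\eta$ is Lipschitz, vanishes on $[\abs{\pb}\le\tau]$, and $z\in H^1_0(\Om)\cap C(\overline\Om)$. One also has to check that the chain rule $\nabla\eta=\psi'(\abs{\pb})\nabla\abs{\pb}$ holds a.e.; it does, because $\psi$ is piecewise linear and $\nabla\abs{\pb}=0$ a.e. on the level sets where $\psi'$ jumps. If needed, one can approximate $\psi$ by smooth functions and pass to the limit.
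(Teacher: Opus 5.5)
Your proof is correct and matches the paper's argument step for step: the same cut-off $\eta=\min\{1,(|\bar p|-\tau)_+/(k_\varepsilon-\tau)\}$, the same test function $\eta^2 z$ leading to $\int_\Omega A\nabla(\eta z)\cdot\nabla(\eta z)\,\mathrm{d}x\le\int_\Omega z^2A\nabla\eta\cdot\nabla\eta\,\mathrm{d}x$, then Poincar\'e and the same algebra as in Theorem \ref{T1.1} (the paper tests the full equation on $\Omega$ and uses $\int_\Omega h\eta^2 z\,\mathrm{d}x=0$, rather than placing $\eta^2 z$ in $H^1_0(\Omega_\tau)$). The Cauchy--Schwarz/Young step you mention is not needed, and would in fact worsen the constant; the exact identity from testing with $\eta^2 z$ is what gives precisely $\widehat C_0$.
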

\begin{proof}
Given $h\in C_{\bar u}^{\tau}$ we write $z:=z_{\bar u,h}$ and consider $\eta :=\min\left\{1,\frac{(\vert \bar p\vert -\tau)_+}{k_\varepsilon-\tau} \right\}$. Since $\vert \bar p\vert \in W^{1,\infty}(\Omega)$, 
$\eta\in W^{1,\infty}(\Omega)$, $0\leq\eta\leq1$, $\eta=0$ on $[\vert \bar  p \vert \leq\tau]$, and $\eta=1$ on $[k_\varepsilon \leq \vert \bar p\vert]$, we have 
\begin{equation}
\label{E4.2}
\nabla\eta =\frac{1}{k_\varepsilon-\tau}\nabla \vert \bar p\vert \chi_{[\tau< \vert \bar p \vert <k_\varepsilon]}
\quad\text{a.e. in }\Omega,
\end{equation}
and $|\nabla\eta|\leq\frac{L_{\bar p}}{k_\varepsilon-\tau}\chi_{[\tau<\vert \bar p\vert <k_\varepsilon]}$. Because $z\in H_0^1(\Omega)$ and $\eta\in W^{1,\infty}(\Omega)$, $\eta z$ and $\eta^2z$ belong to $H_0^1(\Omega)$. Since
$h=0$ almost everywhere on $[\vert \bar p \vert >\tau]$, while $\eta=0$ on $[\vert \bar p\vert \leq\tau]$, $\int_\Omega h\eta^2z \mathrm{d}x=0$. Testing the equation for $z$ with $\eta^2z$ shows $a(z,\eta^2z)=0$ and 
\begin{align}
a(\eta z,\eta z)&=a(z,\eta^2z) + \int_\Omega z^2 A\nabla\eta\cdot\nabla\eta  \mathrm{d}x
\nonumber=\int_\Omega z^2 A\nabla\eta\cdot\nabla\eta  \mathrm{d}x.
\end{align}
From the ellipticity, $a(\eta z,\eta z) \geq \lambda_A\|\nabla(\eta z)\|_{L^2(\Omega)}^2\geq \frac{\lambda_A}{C_{\Omega}^2} \|\eta z\|_{L^2(\Omega)}^2$. On the other hand, \eqref{E4.2} gives
\begin{align*}
a(\eta z,\eta z)
&\leq \Lambda_A \int_\Omega z^2|\nabla\eta|^2  \mathrm{d}x \leq\frac{\Lambda_A L_{\bar p}^2}{(k_\varepsilon-\tau)^2}
\int_{[\tau<\vert \bar p \vert <k_\varepsilon]}z^2  \mathrm{d}x \leq \frac{\Lambda_A L_{\bar p}^2}{(k_\varepsilon-\tau)^2}\|z\|_{L^2([\vert \bar p\vert <k_\varepsilon])}^2.
\end{align*}
Because $\eta=1$ on $[k_\varepsilon \leq \vert \bar p\vert]$, 
\begin{align*}
\|z\|_{L^2([k_\varepsilon \leq \vert \bar p\vert])}^2\leq
\|\eta z\|_{L^2(\Omega)}^2\leq \frac{C_{\Omega}^2}{\lambda_A}a(\eta z,\eta z)\leq\frac{\widehat C_0}{(k_\varepsilon-\tau)^2}\|z\|_{L^2([\vert \bar p\vert <k_\varepsilon])}^2.
\end{align*}
This proves \eqref{E4.1}. Estimate \eqref{E4.1} gives
\begin{align*}
\|z\|_{L^2([k_\varepsilon  \leq \vert \bar p\vert])}^2&\leq\frac{\widehat C_0}{(k_\varepsilon-\tau)^2}\|z\|_{L^2([\vert \bar p\vert <k_\varepsilon])}^2\\
&=\frac{\widehat C_0}{(k_\varepsilon-\tau)^2}([\|z\|_{L^2([k_\varepsilon \leq \vert \bar p\vert])}^2+\|z\|_{L^2([\vert \bar p\vert <k_\varepsilon])}^2]-\|z\|_{L^2([k_\varepsilon \leq \vert \bar p\vert])}^2),
\end{align*}
and 
\[
\|z\|_{L^2([k_\varepsilon \leq \vert \bar p\vert])}^2\leq \frac{\widehat C_0}{(k_\varepsilon-\tau)^2+\widehat C_0}   [\|z\|_{L^2([k_\varepsilon \leq \vert \bar p\vert])}^2+\|z\|_{L^2([\vert \bar p\vert <k_\varepsilon])}^2].
\]
By the definition of $[k_\varepsilon \leq \vert \bar p\vert]$, $[\vert \bar p\vert <k_\varepsilon]$, and $M_\varepsilon$, $c\geq\varepsilon\quad\text{on }[\vert \bar p\vert <k_\varepsilon]$, $c\geq-M_\varepsilon$ on $[k_\varepsilon \leq \vert \bar p\vert]$. Therefore,
\begin{align*}
J''(\bar u)[h,h]&=\int_\Omega cz^2  \mathrm{d}x\geq
\varepsilon \|z\|_{L^2([\vert \bar p\vert <k_\varepsilon])}^2-M_\varepsilon \|z\|_{L^2([k_\varepsilon \leq \vert \bar p\vert])}^2\\
&= \varepsilon [\|z\|_{L^2([k_\varepsilon \leq \vert \bar p\vert])}^2+\|z\|_{L^2([\vert \bar p\vert<k_\varepsilon])}^2]-(\varepsilon+M_\varepsilon)\|z\|_{L^2([k_\varepsilon \leq \vert \bar p\vert])}^2.
\end{align*}
All together, we obtain
\begin{align*}
J''(\bar u)[h,h]
&\geq\left[\varepsilon- (\varepsilon+M_\varepsilon)\frac{\widehat C_0}{(k_\varepsilon-\tau)^2+\widehat C_0}\right][\|z\|_{L^2([k_\varepsilon \leq \vert \bar p\vert])}^2+\|z\|_{L^2([\vert \bar p\vert <k_\varepsilon])}^2]\\
&=\frac{\varepsilon (k_\varepsilon-\tau)^2-\widehat C_0M_\varepsilon}{(k_\varepsilon-\tau)^2+\widehat C_0}\|z\|_{L^2(\Omega)}^2.
\end{align*}
\end{proof}

\section{Stability of the sufficient second-order condition}
\label{S5}
In this section  we sketch that under the assumption of the paper, the sufficient second-order optimality condition is stable. This improves the result obtained in \cite[Section 5.2.1]{zbMATH08071521} on the stability of the sufficient second-order optimality condition. 
Given a perturbed tracking data $y_{d,\rho}\in L^r(\Omega)$, where $r>n$, we consider
\[
J_\rho(u):= \frac12\int_\Omega(y_u-y_{d,\rho})^2  \mathrm{d}x.
\]
For $\tau>0$, we define
\[
K_{\bar p}^{\tau}:=\left\{h\in L^2(\Omega): h=0 \quad\text{a.e. on }[|\bar p|>\tau]\right\}.
\]
It is clear that $C_{\bar u}^{\tau}
\subset K_{\bar p}^{\tau}$. If the assumptions of Theorem \ref{T1.1} are satisfied, if
$(\varepsilon,\tau)$ satisfy \eqref{E2.1} and $\varepsilon(k_\varepsilon-\tau)>C_0M_\varepsilon$, then
\begin{equation}
\label{E5.1}
J''(\bar u)[h,h]\geq \kappa_{\varepsilon,\tau}\|z_{\bar u,h}\|_{L^2(\Omega)}^2\quad\forall h\in K_{\bar p}^{\tau}.
\end{equation}
This holds since the proofs of Proposition \ref{P3.5} and
Theorem \ref{T1.1} use that $h=0$ on $[|\bar p|>\tau]$, while the sign conditions that appears in $C_{\bar u}^{\tau}$ is not used. 
In the following, let $\rho$ be a parameter standing for the existence of a perturbation in the objective functional. For $b_\rho:= f'(y_\rho)$, the linearized state
$z_{\rho,h}$ satisfies 
\[
-\operatorname{div}(A\nabla z_{\rho,h})+b_\rho z_{\rho,h}=h \text{ in }\Omega,\ z_{\rho,h} =0 \text{ on }\Gamma.
\]
The second variation is given by $J_\rho''(\bar u_\rho)[h,h]=\int_\Omega c_\rho z_{\rho,h}^2  \mathrm{d}x$ for $c_\rho:=(1-p_\rho f''(y_\rho))\in L^\infty(\Omega)$, where $p_\rho$ denotes the adjoint for the perturbed problem. For $\tau>0$, the extended critical cone for the perturbed problem is then 
\begin{equation}
\label{E5.2}
C_{\bar u_\rho}^{\tau} :=\left\{h\in T_{\mathcal U}(\bar u_\rho):h=0\quad\text{a.e. on }[|p_\rho|>\tau]\right\}.
\end{equation}

\begin{theorem}
\label{T5.1}
Let the assumptions of Theorem \ref{T1.1} hold. Let
$(\varepsilon,\tau)$ satisfy \eqref{E2.1},   $\Delta:=\varepsilon(k_\varepsilon-\tau)-C_0M_\varepsilon>0$ and 
\begin{equation}
\label{E5.3}
\begin{aligned}
\|p_\rho-\bar p\|_{L^\infty(\Omega)} \rightarrow0,\ \|y_\rho-\bar y\|_{L^\infty(\Omega)} \rightarrow0,\ \|c_\rho-\bar c\|_{L^\infty(\Omega)} \rightarrow0
\end{aligned}
\quad \text{as }\rho\to0.
\end{equation}
Then there exist $\rho_0>0$ and
$\kappa>0$ such that
\[
J_\rho''(\bar u_\rho)[h,h] \geq \kappa \|z_{\rho,h}\|_{L^2(\Omega)}^2
\]
for every $|\rho|<\rho_0$ and every $h\in C_{\bar u_\rho}^{\tau}$. 
\end{theorem}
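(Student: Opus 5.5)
The plan is to reduce the perturbed statement to the unperturbed coercivity \eqref{E5.1} on the larger cone $K^{\tau'}_{\bar p}$, with a slightly enlarged threshold $\tau'$, and then absorb the perturbation errors.

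\emph{Step 1: choice of $\tau'$.} Since $\Delta>0$ is strict and $\tau<k_\varepsilon$, continuity in $\tau$ lets us fix $\tau'\in(\tau,k_\varepsilon)$ with $\varepsilon(k_\varepsilon-\tau')-C_0M_\varepsilon\ge\Delta/2>0$. If $\tau'>\sigma_1$ were an issue we shrink it; we only need $\tau'<k_\varepsilon\le\sigma_1$. Then \eqref{E2.1} holds for $(\varepsilon,\tau')$, and \eqref{E5.1} gives $J''(\bar u)[h,h]\ge\kappa'\|z_{\bar u,h}\|^2_{L^2(\Omega)}$ for all $h\in K^{\tau'}_{\bar p}$, where $\kappa':=\kappa_{\varepsilon,\tau'}>0$.

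\emph{Step 2: cone inclusion.} By \eqref{E5.3}, for $|\rho|<\rho_0$ we have $\|p_\rho-\bar p\|_{L^\infty}<\tau'-\tau$. Then $[|\bar p|>\tau']\subset[|p_\rho|>\tau]$, so every $h\in C^\tau_{\bar u_\rho}$ vanishes a.e. on $[|\bar p|>\tau']$. Hence $C^\tau_{\bar u_\rho}\subset K^{\tau'}_{\bar p}$. The sign conditions of $T_{\mathcal U}(\bar u_\rho)$ are irrelevant here, which is exactly why \eqref{E5.1} was stated on $K$.

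\emph{Step 3: comparison of quadratic forms.} For such $h$, write
\[
J_\rho''(\bar u_\rho)[h,h]=\int_\Omega \bar c\, z_{\bar u,h}^2\,\mathrm dx+\int_\Omega (c_\rho-\bar c)z_{\rho,h}^2\,\mathrm dx+\int_\Omega \bar c\,(z_{\rho,h}^2-z_{\bar u,h}^2)\,\mathrm dx .
\]
The second term is bounded by $\|c_\rho-\bar c\|_{L^\infty}\|z_{\rho,h}\|^2$. For the third, $w:=z_{\rho,h}-z_{\bar u,h}$ solves $\mathcal L w=(f'(\bar y)-b_\rho)z_{\rho,h}$ with homogeneous boundary data. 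Since $f\in C^2$ and $y_\rho\to\bar y$ uniformly, $\|f'(\bar y)-b_\rho\|_{L^\infty}\to0$. Stability of $\mathcal L$ in $L^2$ (Lax–Milgram and Poincaré, with $b\ge0$) gives $\|w\|_{L^2}\le \delta_\rho\|z_{\rho,h}\|_{L^2}$ with $\delta_\rho\to0$. Consequently $\|z_{\bar u,h}\|\ge(1-\delta_\rho)\|z_{\rho,h}\|$, and
\[
\Bigl|\int\bar c(z_{\rho,h}^2-z_{\bar u,h}^2)\Bigr|\le\|\bar c\|_{L^\infty}\|w\|\,(\|z_{\rho,h}\|+\|z_{\bar u,h}\|)\le C\delta_\rho\|z_{\rho,h}\|^2 .
\]

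\emph{Step 4: conclusion.} Combining the three estimates,
\[
J_\rho''(\bar u_\rho)[h,h]\ge\bigl[\kappa'(1-\delta_\rho)^2-\|c_\rho-\bar c\|_{L^\infty}-C\delta_\rho\bigr]\|z_{\rho,h}\|^2 .
\]
Shrinking $\rho_0$ makes the bracket at least $\kappa:=\kappa'/2$.

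The main obstacle is Step 2. It needs a strict margin in $\tau$, so that the perturbed cone lies inside an unperturbed cone where Theorem \ref{T1.1} still applies with a uniform constant. Remark \ref{R3.3} supplies this uniformity, since $C_0$ does not depend on $\tau'$.
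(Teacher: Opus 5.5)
Your proposal is correct and follows the paper's proof essentially step by step. The paper also enlarges the threshold to $\tau+\eta$ with $0<\eta<\min\{k_\varepsilon-\tau,\Delta/\varepsilon\}$ and uses $\|p_\rho-\bar p\|_{L^\infty(\Omega)}<\eta$ to get $C^\tau_{\bar u_\rho}\subset K^{\tau+\eta}_{\bar p}$. It then compares the two quadratic forms via the equation for $z_{\rho,h}-z_{\bar u,h}$. The only cosmetic difference is in that comparison: you write the difference equation with the unperturbed operator $\mathcal L$ and right-hand side $(f'(\bar y)-b_\rho)z_{\rho,h}$, so your errors are measured directly in $\|z_{\rho,h}\|_{L^2(\Omega)}$, whereas the paper uses the perturbed operator with right-hand side $(f'(\bar y)-b_\rho)z_{\bar u,h}$ and converts $\|z_{\bar u,h}\|_{L^2(\Omega)}$ into $\|z_{\rho,h}\|_{L^2(\Omega)}$ at the end.
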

\begin{proof}
Since $\Delta>0$, we choose $\eta>0$ such that $0<\eta<
\min\left\{k_\varepsilon-\tau, \frac{\Delta}{\varepsilon}\right\}$.
It follows that $\tau+\eta<k_\varepsilon$ and
\begin{align*}
\varepsilon\bigl(k_\varepsilon-(\tau+\eta)\bigr)-C_0M_\varepsilon
&=\varepsilon(k_\varepsilon-\tau)-C_0M_\varepsilon-\varepsilon\eta=\Delta-\varepsilon\eta>0.
\end{align*}
Therefore,
\begin{equation}
\label{E5.4}
J''(\bar u)[h,h] \geq \kappa_\eta \|z_{\bar u,h}\|_{L^2(\Omega)}^2 \quad \forall h\in\mathcal K_{\bar p}^{\tau+\eta},
\end{equation}
where
\[
\kappa_\eta:=\frac{\varepsilon(k_\varepsilon-\tau-\eta)-C_0M_\varepsilon}{(k_\varepsilon-\tau-\eta)+C_0}>0.
\]
From \eqref{E5.3}, after reducing the size of the
perturbation if necessary, we have
\begin{equation}
\label{E5.5}
\|p_\rho-\bar p\|_{L^\infty(\Omega)}<\eta.
\end{equation}
Let $h\in C_{\bar u_\rho}^{\tau}$, we show that
\begin{equation}
\label{E5.6}
h\in K_{\bar p}^{\tau+\eta}.
\end{equation}
Suppose that $|\bar p(x)|>\tau+\eta$. Then \eqref{E5.5} implies
\[
|p_\rho(x)| \geq |\bar p(x)|-|p_\rho(x)-\bar p(x)|> \tau+\eta-\eta=\tau.
\]
Since $h\in C_{\bar u_\rho}^{\tau}$, it follows $h(x)=0$. Set $\bar z:=z_{\bar u,h}$, $z_\rho:=z_{\rho,h}$. The functions $\bar z, z_\rho \in H^1_0(\Omega)$ solve
\[
-\operatorname{div}(A\nabla\bar z)+\bar b\bar z=h, \qquad -\operatorname{div}(A\nabla z_\rho)+b_\rho z_\rho=h,
\]
respectively. Subtracting the two equations gives
\begin{equation}
\label{E5.7}
-\operatorname{div}\bigl(A\nabla(z_\rho-\bar z)\bigr) +b_\rho(z_\rho-\bar z)= (\bar b-b_\rho)\bar z.
\end{equation}
Taking the Poincare constant $C_{\Omega}$ of $\Omega$ we set $C_R:=\frac{C_\Omega^2}{\lambda_A}$. Since $b_\rho\geq0$, and 
\eqref{E5.7} we have
\begin{equation}
\label{E5.8}
\|z_\rho-\bar z\|_{L^2(\Omega)} \leq a_\rho\|\bar z\|_{L^2(\Omega)},
\end{equation}
where $a_\rho:=C_R \|b_\rho-\bar b\|_{L^\infty(\Omega)}$. Further,
\begin{equation}
\label{E5.9}
\|z_\rho\|_{L^2(\Omega)} \leq (1+a_\rho)\|\bar z\|_{L^2(\Omega)}.
\end{equation}
We define $\delta_\rho:=\|c_\rho-\bar c\|_{L^\infty(\Omega)}$ and obtain
\begin{align}
&
\left| J_\rho''(\bar u_\rho)[h,h]-J''(\bar u)[h,h]\right|\nonumber\\
&\quad\leq \delta_\rho \|z_\rho\|_{L^2(\Omega)}^2+\|\bar c\|_{L^\infty(\Omega)} \|z_\rho-\bar z\|_{L^2(\Omega)} \left(\|z_\rho\|_{L^2(\Omega)}+ \|\bar z\|_{L^2(\Omega)}\right).\label{E5.10}
\end{align}
Combining \eqref{E5.8} and
\eqref{E5.9} with
\eqref{E5.10} gives
\begin{equation}
\label{E5.11}
\left|
J_\rho''(\bar u_\rho)[h,h]-J''(\bar u)[h,h]\right|\leq\omega_\rho\|\bar z\|_{L^2(\Omega)}^2,
\end{equation}
where $\omega_\rho :=
\delta_\rho(1+a_\rho)^2 + \|\bar c\|_{L^\infty(\Omega)}a_\rho(2+a_\rho)$. The assumptions imply $a_\rho\rightarrow0$,  $\delta_\rho\rightarrow0$, $\omega_\rho\rightarrow0$. By \eqref{E5.6} and
\eqref{E5.4}, $J''(\bar u)[h,h]
\geq \kappa_\eta
\|\bar z\|_{L^2(\Omega)}^2$. Together with \eqref{E5.11}, this gives\[
J_\rho''(\bar u_\rho)[h,h] \geq (\kappa_\eta-\omega_\rho) \|\bar z\|_{L^2(\Omega)}^2.
\]
On the other hand, \eqref{E5.9} implies $\|\bar z\|_{L^2(\Omega)}^2\geq \frac{1}{(1+a_\rho)^2} \|z_\rho\|_{L^2(\Omega)}^2$. Thus, 
\[
J_\rho''(\bar u_\rho)[h,h]\geq\frac{\kappa_\eta-\omega_\rho}{(1+a_\rho)^2}\|z_\rho\|_{L^2(\Omega)}^2.
\]
Since $a_\rho\to0$ and $\omega_\rho\to0$, there exists
$\rho_0>0$ such that $\omega_\rho\leq\frac{\kappa_\eta}{2}$ and $
(1+a_\rho)^2\leq2$ if $|\rho|<\rho_0$ and therefore $\frac{\kappa_\eta-\omega_\rho}{(1+a_\rho)^2} \geq \frac{\kappa_\eta}{4}$.
\end{proof}

\begin{corollary}
\label{C5.2}
Let $\rho$ be sufficiently small, and assume that $y_{d,\rho}\rightarrow y_d$ in $L^r(\Omega)$. Let $\bar u_\rho$ be stationary for $J_\rho$ and $\bar u_\rho\rightharpoonup\bar u$ weakly in $L^2(\Omega)$ as $\rho\to0$. If the reference control $\bar u$ satisfies the
assumptions of Theorem \ref{T5.1}, then there exist $\rho_0>0$ and $\kappa>0$ such that
\[
J_\rho''(\bar u_\rho)[h,h]\geq\kappa\|z_{\rho,h}\|_{L^2(\Omega)}^2 \quad \forall h\in C_{\bar u_\rho}^{\tau}
\]
whenever $|\rho|<\rho_0$.
\end{corollary}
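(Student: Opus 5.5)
The plan is to reduce Corollary \ref{C5.2} to Theorem \ref{T5.1}. It suffices to check the three convergences in \eqref{E5.3}; the conclusion is then exactly that of Theorem \ref{T5.1}. Write $y_\rho:=y_{\bar u_\rho}$ for the state of $\bar u_\rho$ (given by \eqref{E1.3}) and $p_\rho$ for its adjoint, i.e. the solution of \eqref{E2.3} with $y_d$ replaced by $y_{d,\rho}$.

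First, the states. Since $\bar u_\rho\in\mathcal U$ is bounded in $L^\infty(\Omega)$, the weak convergence $\bar u_\rho\rightharpoonup\bar u$ in $L^2(\Omega)$ upgrades to weak convergence in $L^p(\Omega)$ for any $p>n/2$; after a subsequence argument the limit is identified as $\bar u$. Theorem \ref{T2.3} then gives $\|y_\rho-\bar y\|_{C(\overline\Omega)}\to0$.

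Second, the adjoints. Subtracting the adjoint equations gives
\[
-\operatorname{div}\bigl(A\nabla(p_\rho-\bar p)\bigr)+f'(y_\rho)(p_\rho-\bar p)=(y_\rho-\bar y)-(y_{d,\rho}-y_d)+\bigl(f'(\bar y)-f'(y_\rho)\bigr)\bar p .
\]
The right-hand side tends to zero in $L^r(\Omega)$ with $r>n$. This uses $y_\rho\to\bar y$ uniformly, $y_{d,\rho}\to y_d$ in $L^r(\Omega)$, the uniform continuity of $f'$ on the bounded range $[-C,C]$ of all states, and $\bar p\in L^\infty(\Omega)$. Because $f'(y_\rho)\ge0$ is uniformly bounded, the $W^{2,r}$ estimate (as in Theorem \ref{T2.3}, or \cite[Theorem 9.15]{GilbargTrudinger2001}) with a constant uniform in $\rho$ applies. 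Together with the embedding $W^{2,r}(\Omega)\hookrightarrow C(\overline\Omega)$, this gives $\|p_\rho-\bar p\|_{L^\infty(\Omega)}\to0$. The uniformity of that constant in the zero-order coefficient is the step I expect to need the most care. I would handle it by first deriving an $H^1$ bound via coercivity, which holds because $f'\ge0$, and then treating the term $f'(y_\rho)(p_\rho-\bar p)$ as part of the right-hand side in a bootstrap.

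Third, the curvature coefficients. Write
\[
c_\rho-\bar c=\bar p f''(\bar y)-p_\rho f''(y_\rho)=(\bar p-p_\rho)f''(y_\rho)+\bar p\bigl(f''(\bar y)-f''(y_\rho)\bigr).
\]
Since $f''$ is bounded and uniformly continuous on $[-C,C]$, this difference tends to zero in $L^\infty(\Omega)$.

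With \eqref{E5.3} established, Theorem \ref{T5.1} yields $\rho_0>0$ and $\kappa>0$ (for instance $\kappa=\kappa_\eta/4$ from its proof) with the asserted inequality for all $h\in C^\tau_{\bar u_\rho}$. The constants $C_0$ and $M_\eps$, and the hypotheses of Theorem \ref{T1.1}, concern only the reference control $\bar u$, so nothing further depends on $\rho$.
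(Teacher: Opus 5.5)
Your proposal is correct and follows the same route as the paper: you verify the three convergences in \eqref{E5.3} and then invoke Theorem \ref{T5.1}. You get the state convergence from Theorem \ref{T2.3}, the adjoint convergence from the subtracted adjoint equation with $W^{2,r}$ regularity, and the convergence of $c_\rho$ from continuity of $f''$ on the uniformly bounded range of the states. Your $H^1$-plus-bootstrap argument for the $\rho$-uniformity of the regularity constant spells out what the paper calls ``the usual regularity results,'' and you correctly note that only $L^\infty$ convergence of $p_\rho$, not $C^1$ convergence, is needed.
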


\begin{proof}
From the weak convergence of the controls, we get $y_{\bar u_\rho} \rightarrow \bar y$ in $C(\overline\Omega)$ and $f'(y_{\bar u_\rho})
\rightarrow f'(\bar y)$ in $C(\overline{\Omega})$. Subtracting the adjoint equations and applying the usual
regularity results gives $p_\rho \rightarrow \bar p$ in $W^{2,r}(\Omega)$ and the Sobolev-Morrey embedding,  $p_\rho \rightarrow \bar p$ in $C^1(\overline\Omega)$. Thus $f''$ imply $1-p_\rho f''(y_{\bar u_\rho})\rightarrow 1-\bar p f''(\bar y)$ in $L^\infty(\Omega)$ and all the assumptions in \eqref{E5.3} hold, and the claim follows from Theorem \ref{T5.1}.
\end{proof}
The results in this section can be generalized considerably which will be done in future work.

\appendix
\section{Auxiliary results}

\begin{theorem}[Coarea formula {\cite[Theorem 3.2.12]{zbMATH03280855}}]\label{coarea}
Let $F:\mathbb{R}^n \to \mathbb{R}$ be Lipschitz continuous and let
$g:\mathbb{R}^n \to [0,\infty]$ be measurable. Then
\[
\int_{\mathbb{R}^n} g(x) |\nabla F(x)|\ dx =\int_{\mathbb{R}}\left(\int_{F^{-1}(s)}g(x)\ d\mathcal{H}^{n-1}(x)\right)\ ds.
\]
\end{theorem}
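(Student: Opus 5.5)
The last statement before the end is the coarea formula (Theorem \ref{coarea}), cited from Federer. It is a classical result, so the plan is to reduce it to standard measure-theoretic ingredients rather than reprove it from scratch.

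\textbf{Step 1: reduction to sets and extension.} Both sides are linear in $g$ and stable under monotone limits, by monotone convergence applied both in $x$ and in the inner integral. Approximating $g$ from below by simple functions therefore reduces the claim to $g=\chi_E$ with $E$ a Borel set. A Lebesgue null set $E$ needs a separate check: the left side vanishes trivially. For the right side we need $\int_{\mathbb R}\mathcal H^{n-1}(E\cap F^{-1}(s))\,ds=0$. This follows from the Eilenberg-type inequality
\[
\int^*\mathcal H^{n-1}(E\cap F^{-1}(s))\,ds\le C\,\mathrm{Lip}(F)\,\mathcal L^n(E),
\]
which is proved by covering $E$ with small balls and estimating each slice by the covering sums. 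This inequality also gives the measurability of $s\mapsto\mathcal H^{n-1}(E\cap F^{-1}(s))$.

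\textbf{Step 2: the degenerate region.} Split $E=(E\cap[\nabla F=0])\cup(E\cap[\nabla F\neq0])$. On $Z=[\nabla F=0]$ the left side is zero. To show $\mathcal H^{n-1}(Z\cap F^{-1}(s))=0$ for a.e. $s$, I would perturb $F$ to $(x,y)\mapsto F(x)+\delta y$ in one extra variable. Applying the Eilenberg inequality there and letting $\delta\to0$ gives the claim. This is Federer's standard trick.

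\textbf{Step 3: the nondegenerate region, which is the main obstacle.} By Rademacher's theorem $F$ is differentiable a.e. Using a Lusin-type $C^1$ approximation together with Whitney's extension theorem, decompose $[\nabla F\neq0]$, up to a null set, into countably many Borel pieces $E_k$. On each piece, after a rotation, $F$ agrees with a $C^1$ function $G_k$ whose gradient is nonzero and nearly constant, and the map $\Phi_k(x)=(x',G_k(x))$ is bi-Lipschitz with Jacobian $J\Phi_k=|\partial_nG_k|$, close to $|\nabla F|$. The change of variables formula for $\Phi_k$, combined with Fubini in the $(x',s)$ coordinates and the area formula on each level set, viewed as a graph over $x'$, gives the identity on $E_k$ up to a factor $(1\pm\theta)$. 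Summing over $k$ and letting $\theta\to0$ completes the argument. Controlling the errors uniformly in this linearization is the delicate part. In the setting where the paper applies the formula, namely $F=|\bar p|\in C^{1,\vartheta}$ near the level sets with $|\nabla F|\ge\mu$ by Lemma \ref{L3.2}, Step 3 simplifies considerably. There the charts of Lemma \ref{L3.2}(b) directly provide such $\Phi$, and a partition of unity replaces the Lusin decomposition.
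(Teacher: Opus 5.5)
The paper gives no proof of this statement; it only cites Federer's Theorem 3.2.12. Your outline is therefore a reconstruction of the textbook argument: null sets via Eilenberg's inequality, the critical set $Z$, and a Lusin--Whitney decomposition of $[\nabla F\neq0]$ treated with the maps $\Phi_k(x)=(x',G_k(x))$. Steps 1 and 3 are fine as sketches, but Step 2 does not work as stated, and it is placed before the step it depends on.

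Eilenberg's inequality applied to $g_\delta(x,y)=F(x)+\delta y$ on $Z\times[0,1]$ bounds $\int^*\mathcal H^n(g_\delta^{-1}(w)\cap(Z\times[0,1]))\,dw$ only by $C\,\mathrm{Lip}(g_\delta)\,\mathcal L^n(Z)$. Since $\mathrm{Lip}(g_\delta)\ge\mathrm{Lip}(F)$ does not tend to zero, letting $\delta\to0$ gives nothing. The smallness in Federer's trick comes from $\nabla g_\delta=(\nabla F,\delta)$: it never vanishes and has modulus $\delta$ on $Z\times\R$. Hence the \emph{nondegenerate} coarea formula, i.e.\ your Step 3 in $\R^{n+1}$, yields
\[
\int_{\R}\mathcal H^{n}\bigl(g_\delta^{-1}(w)\cap(Z\times[0,1])\bigr)\,dw=\delta\,\mathcal L^n(Z)
\]
for bounded $Z$. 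Eilenberg's inequality is then applied to the projection $(x,y)\mapsto y$ on each level set $g_\delta^{-1}(w)$, whose slices are $(Z\cap F^{-1}(w-\delta y))\times\{y\}$. The substitution $s=w-\delta y$ gives $\int_{\R}\mathcal H^{n-1}(Z\cap F^{-1}(s))\,ds\le C\delta\,\mathcal L^n(Z)$. So Step 2 rests on Step 3 and must come after it.

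There is also a measurability issue. Interchanging the $w$- and $y$-integrations requires either the measurability of $s\mapsto\mathcal H^{n-1}(Z\cap F^{-1}(s))$ for the generally non-null set $Z$, or a measurable-cover argument. The former follows from the usual upper semicontinuity of $s\mapsto\mathcal H^{n-1}_\sigma(K\cap F^{-1}(s))$ for compact $K$. Your measurability remark in Step 1 covers only null sets.

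A simpler repair is to treat $Z$ directly. For $\eta>0$ and $x\in Z$ there is $r_x>0$ with $|F(y)-F(x)|\le\eta|y-x|$ for $|y-x|\le r_x$. A Vitali covering of $Z\cap B_R$ by such balls, contained in a bounded open set $U\supset Z\cap B_R$, gives the localized Eilenberg bound $\int^*\mathcal H^{n-1}(Z\cap B_R\cap F^{-1}(s))\,ds\le C\eta\,\mathcal L^n(U)$. This uses only upper integrals and vanishes as $\eta\to0$.

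Your closing observation is correct and is all the paper actually needs. In \eqref{E3.14} the integrand $\psi=|z|^2\chi_{[\tau<|\bar p|<k_\varepsilon]}$ lives where $|\bar p|\in C^{1,\vartheta}$ and $|\nabla|\bar p||\ge\mu$, so Steps 1 and 2 are unnecessary there. The local diffeomorphisms $x\mapsto(x',|\bar p|(x))$ from the proof of Lemma \ref{L3.2}, a partition of unity, Fubini, and the area formula for $C^1$ graphs suffice.
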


\bibliographystyle{siamplain}
\bibliography{bib}

\end{document}